\theoremstyle{plain}
\newtheorem{theorem}{Theorem}[section]
\newtheorem{lemma}[theorem]{Lemma}
\newtheorem{example}[theorem]{Example}
\theoremstyle{remark}
\newtheorem{remark}{Remark}
\theoremstyle{definition}
\newtheorem{definition}{Definition}
\def\s{\sigma}
\def\xs{_\s[x]_\gamma^{\alpha, \beta}}
\def\a{{\alpha(\cdot,\cdot)}}
\def\an{{\alpha_n(\cdot,\cdot)}}
\def\ao{{\alpha_1(\cdot,\cdot)}}
\def\ai{{\alpha_i(\cdot,\cdot)}}
\def\b{{\beta(\cdot,\cdot)}}
\def\bn{{\beta_n(\cdot,\cdot)}}
\def\bo{{\beta_1(\cdot,\cdot)}}
\def\bi{{\beta_i(\cdot,\cdot)}}
\def\t{\tau}
\def\e{\epsilon}
\def\LIan{{_aI_t^{\an}}}
\def\RIan{{_tI_b^{\an}}}
\def\LDa{{_aD_t^{\a}}}
\def\LDan{{_aD_t^{\an}}}
\def\LDb{{_aD_t^{\b}}}
\def\RDa{{_tD_b^{\a}}}
\def\RDan{{_tD_b^{\an}}}
\def\RDbn{{_tD_b^{\bn}}}
\def\LC{{^C_aD_t^{\a}}}
\def\LCan{{^C_aD_t^{\an}}}
\def\LCao{{^C_aD_t^{\ao}}}
\def\RCa{{^C_tD_b^{\a}}}
\def\RCan{{^C_tD_b^{\an}}}
\def\RCb{{^C_tD_b^{\b}}}
\def\RCbo{{^C_tD_b^{\bo}}}
\def\RCbn{{^C_tD_b^{\bn}}}
\def\DC{{^CD_\gamma^{\a,\b}}}
\def\DCi{{^CD_{\gamma^i}^{\ai,\bi}}}
\def\DS{\displaystyle}
\begin{document}

\title{Combined Fractional Variational Problems 
of Variable Order and Some Computational Aspects\thanks{This 
is a preprint of a paper whose final and definite form is with 
\emph{Journal of Computational and Applied Mathematics}, ISSN: 0377-0427. 
Submitted 15-Feb-2017; Revised 17-Apr-2017;
Accepted 21-Apr-2017.}}

\author{Dina Tavares$^{\rm a,b}$\\
{\tt dtavares@ipleiria.pt}
\and Ricardo Almeida$^{\rm b}$\\
{\tt ricardo.almeida@ua.pt} 
\and Delfim F. M. Torres$^{\rm b}$\\
{\tt delfim@ua.pt}}

\date{$^{a}${\em{ESECS, Polytechnic Institute of Leiria, 2411--901 Leiria, Portugal}}\\[0.3cm]
$^{b}$\text{\em{Center for Research and Development in Mathematics and Applications (CIDMA)}},
\em{Department of Mathematics, University of Aveiro, 3810--193 Aveiro, Portugal}}

\maketitle


\begin{abstract}
We study two generalizations of fractional variational problems 
by considering higher-order derivatives and a state time delay. 
We prove a higher-order integration by parts formula involving 
a Caputo fractional derivative of variable order and we establish 
several necessary optimality conditions for functionals containing 
a combined Caputo derivative of variable fractional order. Because 
the endpoint is considered to be free, we also deduce associated 
transversality conditions. In the end, we consider functionals 
with a time delay and deduce corresponding optimality conditions. 
Some examples are given to illustrate the new results. Computational 
aspects are discussed using the open source software package \textsf{Chebfun}.

\bigskip

\noindent \textbf{Keywords}: fractional calculus of variations, 
variable fractional order, high-order derivatives, time delay, 
computational approximation.

\smallskip

\noindent \textbf{Mathematics Subject Classification 2010}: 26A33; 34A08; 49K05.
\end{abstract}


\small

\section{Introduction}

Fractional Calculus (FC) is an extension of the integer-order calculus
that considers derivatives of any real or complex order \cite{Kilbas,Samko_1}. 
FC was born in 1695 with a letter that L'H\^opital wrote to Leibniz, 
where the derivative of order $1/2$ is suggested \cite{Oldham}. Since then, 
many mathematicians, like Laplace, Riemann, Liouville, Abel, among others, 
contributed to the development of this subject. One of the first applications 
of fractional calculus was due to Abel in his solution to the tautochrone problem \cite{Abel}.
Different forms of fractional operators have been introduced along time, 
like the Riemann--Liouville, the Riesz or the Caputo fractional derivatives. 
For new kinds of fractional derivatives with nonsingular kernels, see \cite{Abd:Bal:2017}.
In this paper, we are interested on the combined Caputo derivative 
$^{C}D_{\gamma}^{\alpha,\beta}$ \cite{Malin_Tor}, 
which is a convex combination of the left Caputo fractional derivative 
of order $\alpha$ and the right Caputo fractional derivative of order $\beta$.

In recent times, FC had an increasing of importance due to its applications 
in various fields, not only in mathematics, but also in physics, engineering, 
chemistry, biology, finance and other areas of science 
\cite{Li,Od_GFCV2013,Pinto,Sie,Sun}. In some of these applications, 
if we compare with the usual integer-order calculus, FC is better to describe 
the hereditary and memory properties of materials and processes. More interesting
possibilities arise when one considers the order $\alpha$ of the fractional integrals 
and derivatives not constant during the process but depending on time. One such 
fractional calculus of variable order was introduced in 1993 by Samko and Ross \cite{SamkoRoss}. 
Afterwards, several mathematicians obtained important results about variable order fractional calculus, 
see, for instance, \cite{Atana,Od_FVC2013,Samko_2}. Here, we consider the combined Caputo fractional 
derivative of a function $x$ with variable order, defined by
$^{C}D_{\gamma}^{\a,\b}x(t)=\gamma_1 \, \LC x(t)+\gamma_2 \, {^C_tD_b^{\b}} x(t)$
with $\gamma =\left(\gamma_1,\gamma_2 \right)\in [0,1]^2$.
Some numerical approximate formulas for such fractional calculus have been proposed, 
see, for example, \cite{Li_Chen,Tavares_2}.

The Fractional Calculus of Variations (FCV) deals with the optimization of functionals
that depend on some fractional operator \cite{book:APT,book:MOT,MR2984893}. 
The fundamental problem is to find functions that extremize (minimize or maximize) 
such a functional. Although FCV was born only twenty years ago, with the 1996--1997 works 
of Riewe in mechanics \cite{Riewe}, in ours days, this is a strong field of mathematics: 
see, e.g., \cite{Almeida,Askari,Malin_Tor,MR2984893,Tavares_3}.
The main goal of our work is to generalize the results obtained in \cite{Tavares_1} 
by considering higher-order and time delay variational problems with a Lagrangian 
depending on a combined Caputo derivative of variable fractional order, 
subject to boundary conditions at the initial time $t=a$.

The outline of the paper is as follows. In Section~\ref{sec:FC}, 
we review the necessary notions on fractional calculus and prove 
an integration by parts formula, involving a higher-order Caputo 
fractional derivative of variable order (see Theorem~\ref{thm:FIP_HO}). 
In Section~\ref{sec:highorder}, we obtain higher-order Euler--Lagrange equations 
and transversality conditions for the generalized variational problems 
with a Lagrangian depending on a combined Caputo derivative of variable fractional order 
(Theorem~\ref{HO_teo1}). Then, in Section~\ref{sec:delay}, 
we deduce necessary optimality conditions when the Lagrangian depends on a time delay. 
Some illustrate examples are presented in Section~\ref{sec:examples}.
We end with Section~\ref{sec:conc} of conclusion and an appendix
with our \textsf{MATLAB} code.


\section{Fractional calculus of variable order}
\label{sec:FC}

In this section, we review some concepts about the operators 
that are used in the sequel. For more information about 
the theory of fractional calculus, see, for example, \cite{Kilbas,Samko_1}.

The Gamma function $\Gamma$ is an extension of the factorial
to real numbers, while the Beta function $B$ is defined by
$B(t,u)=\displaystyle \int_0^1 s^{t-1} (1-s)^{u-1}ds$, $t,u > 0$.
For numerical computations, we have used \textsf{MATLAB} \cite{Matlab}
and \textsf{Chebfun} \cite{chebfun:book}. Both functions $\Gamma(t)$ and $B(t,u)$
are available in \textsf{Matlab} through the commands
\texttt{gamma(t)} and \texttt{beta(t,u)}, respectively.
This second function satisfies the property 
$B(t,u)=\displaystyle \frac{\Gamma(t)\Gamma(u)}{\Gamma(t+u)}$.

Motivated by Definitions 2.3 and 2.5 of \cite{book:MOT}, we present 
here the concepts of higher-order fractional derivatives 
of Riemann--Liouville and Caputo. Let $ n \in \mathbb{N}$ and 
$x: [a,b] \rightarrow \mathbb{R}$ be a function of class $C^n$. 
The fractional order is a continuous function of two variables, 
$\alpha_n:[a,b]^2\to (n-1,n)$.

\begin{definition}[Higher-order Riemann--Liouville fractional derivatives]
\label{HORLFD}
The left and right Riemann--Liouville fractional derivatives of order $\an$ are defined by
$$
\LDan x(t)=\frac{d^n}{dt^n}\int_a^t \frac{1}{\Gamma(n-\alpha_n(t,\t))}(t-\t)^{n-1-\alpha_n(t,\t)}x(\t)d\t
$$
and
$$
\RDan x(t)=(-1)^{n}\frac{d^n}{dt^n}\int_t^b\frac{1}{\Gamma(n-\alpha_n(\t,t))}(\t-t)^{n-1-\alpha_n(\t,t)} x(\t)d\t,
$$
respectively.
\end{definition}

In our work, we use both Riemann--Liouville and Caputo definitions.
The emphasis is, however, in Caputo fractional derivatives.

\begin{definition}[Higher-order Caputo fractional derivatives]
\label{HOCFD}
The left and right Caputo fractional derivatives of order $\an$ are defined by
\begin{equation}
\label{eq:left:Cap:der}
\LCan x(t)=\int_a^t\frac{1}{\Gamma(n-\alpha_n(t,\t))}(t-\t)^{n-1-\alpha_n(t,\t)}x^{(n)}(\t)d\t
\end{equation}
and
\begin{equation}
\label{eq:right:Cap:der}
\RCan x(t)=(-1)^{n}\int_t^b\frac{1}{\Gamma(n-\alpha_n(\t,t))}(\t-t)^{n-1-\alpha_n(\t,t)}x^{(n)}(\t)d\t,
\end{equation}
respectively.
\end{definition}

\begin{remark}
Definitions~\ref{HORLFD} and \ref{HOCFD}, for the particular case
of order between 0 and 1, can be found in \cite{book:MOT}.
They seem to be new for the higher-order case. 
\end{remark}

\textsf{Chebfun} is a \textsf{MATLAB} software system that overloads 
\textsf{MATLAB}'s discrete operations for matrices to analogous 
continuous operations for functions and operators \cite{chebfun:book}. 
Using Definition~\ref{HOCFD}, we implemented in \textsf{Chebfun}
two functions \texttt{leftCaputo(x,alpha,a,n)} and \texttt{rightCaputo(x,alpha,b,n)}
that approximate, respectively, the higher-order Caputo fractional derivatives
$\LCan x(t)$ and $\RCan x(t)$: see Appendix~\ref{Chebfun:Caputo:der}.
Follows two illustrative examples.

\begin{example}
\label{ex:cheb01}
Let $\alpha(t,\tau) = \frac{t^2}{2}$ and $x(t) = t^4$ with $t \in [0,1]$.
In this case, $a = 0$, $b = 1$ and $n = 1$. We have 
${^C_aD_{0.6}^{\alpha(\cdot,\cdot)}} x(0.6) \approx 0.1857$ 
and ${^C_{0.6}D_b^{\alpha(\cdot,\cdot)}} x(0.6) \approx -1.0385$, obtained
in \textsf{Matlab} with our \textsf{Chebfun} functions as follows:
{\small \begin{verbatim}
  a = 0; b = 1; n = 1;
  alpha = @(t,tau) t.^2/2;
  x = chebfun(@(t) t.^4, [a b]);
  LC = leftCaputo(x,alpha,a,n);
  RC = rightCaputo(x,alpha,b,n);
  LC(0.6)
  ans = 0.1857  
  RC(0.6)
  ans = -1.0385  
\end{verbatim}}
\noindent See Figure~\ref{fig:ex1} for a plot
with other values of ${^C_aD_{t}^{\alpha(\cdot,\cdot)}} x(t)$ 
and ${^C_{t}D_b^{\alpha(\cdot,\cdot)}} x(t)$. 
\begin{figure}[htb]
\centering
\includegraphics[scale=0.5]{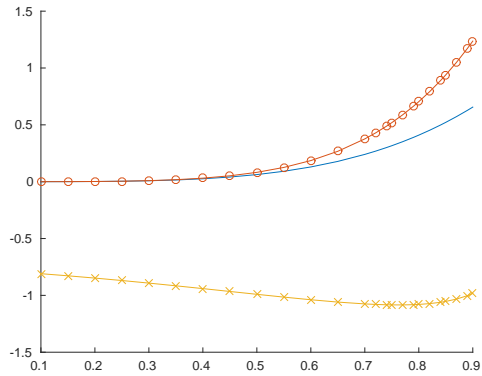}
\caption{Caputo fractional derivatives of Example~\ref{ex:cheb01}:
$x(t) = t^4$ in continuous line, left derivative ${^C_aD_t^{\alpha(\cdot,\cdot)}} x(t)$ 
with ``$\circ -$'' style, and right derivative ${^C_tD_b^{\alpha(\cdot,\cdot)}} x(t)$ 
with ``$\times -$'' style.}
\label{fig:ex1}
\end{figure}
\end{example}

\begin{example}
\label{ex:cheb01b}
In Example~\ref{ex:cheb01}, we have used the polynomial $x(t) = t^4$. 
It is worth mentioning that our \textsf{Chebfun} implementation
works well for functions that are not a polynomial. For example,
let $x(t) = e^t$. In this case, we just need to change
{\small \begin{verbatim}
  x = chebfun(@(t) t.^4, [a b]);
\end{verbatim}}
\noindent in Example~\ref{ex:cheb01} by 
{\small \begin{verbatim}
  x = chebfun(@(t) exp(t), [a b]);
\end{verbatim}}
\noindent to obtain
{\small \begin{verbatim}
  LC(0.6)
  ans = 0.9917  
  RC(0.6)
  ans = -1.1398  
\end{verbatim}}
\noindent See Figure~\ref{fig:ex1b} for a plot
with other values of ${^C_aD_{t}^{\alpha(\cdot,\cdot)}} x(t)$ 
and ${^C_{t}D_b^{\alpha(\cdot,\cdot)}} x(t)$. 
\begin{figure}[htb]
\centering
\includegraphics[scale=0.5]{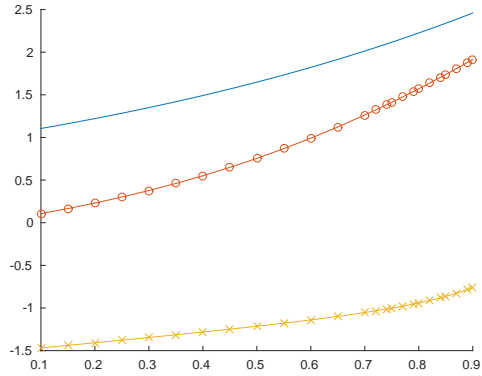}
\caption{Caputo fractional derivatives of Example~\ref{ex:cheb01b}:
$x(t) = e^t$ in continuous line, left derivative ${^C_aD_t^{\alpha(\cdot,\cdot)}} x(t)$ 
with ``$\circ -$'' style, and right derivative ${^C_tD_b^{\alpha(\cdot,\cdot)}} x(t)$ 
with ``$\times -$'' style.}
\label{fig:ex1b}
\end{figure}
\end{example}

Now, we define the generalized fractional integrals for variable order.

\begin{definition}[Riemann--Liouville fractional integrals] 
\label{def:RL:fi}
The left and right Riemann--Liouville fractional integrals of order $\an$ 
are defined respectively by
$$
\LIan x(t)=\int_a^t  \frac{1}{\Gamma(\alpha_n(t,\t))}(t-\t)^{\alpha_n(t,\t)-1}x(\t)d\t
$$
and
$$
\RIan x(t)=\int_t^b\frac{1}{\Gamma(\alpha_n(\t,t))}(\t-t)^{\alpha_n(\t,t)-1}x(\t)d\t.
$$
\end{definition}

Our \textsf{Chebfun} definitions of the
Riemann--Liouville fractional integrals
are given in Appendix~\ref{Chebfun:RL:int}.
Here we illustrate their use.

\begin{example}
\label{ex:cheb02}
Let $\alpha(t,\tau) = \frac{t^2+\tau^2}{4}$ and $x(t) = t^2$ with $t \in [0,1]$.
In this case, $a = 0$, $b = 1$ and $n = 1$. We have 
${_aI_{0.6}^{\alpha(\cdot,\cdot)}} x(0.6) \approx 0.2661$ 
and ${_{0.6}I_b^{\alpha(\cdot,\cdot)}} x(0.6) \approx 0.4619$, obtained
in \textsf{Matlab} with our \textsf{Chebfun} functions as follows:
{\small \begin{verbatim}
  a = 0; b = 1; n = 1;
  alpha = @(t,tau) (t.^2+tau.^2)/4; 
  x = chebfun(@(t) t.^2, [0,1]);
  LFI = leftFI(x,alpha,a); 
  RFI = rightFI(x,alpha,b);
  LFI(0.6)
  ans = 0.2661
  RFI(0.6)
  ans = 0.4619  
\end{verbatim}}
\noindent Other values for ${_aI_{t}^{\alpha(\cdot,\cdot)}} x(t)$ 
and ${_{t}I_b^{\alpha(\cdot,\cdot)}} x(t)$ 
are plotted in Figure~\ref{fig:ex2}.
\begin{figure}[htb]
\centering
\includegraphics[scale=0.5]{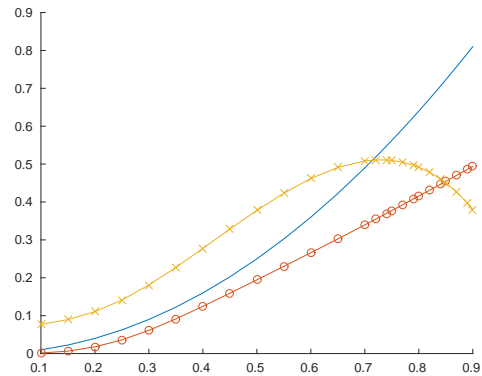}
\caption{Riemann--Liouville fractional integrals of Example~\ref{ex:cheb02}:
$x(t) = t^2$ in continuous line, left integral ${_aI_t^{\alpha(\cdot,\cdot)}} x(t)$ 
with ``$\circ -$'' style, and right integral ${_tI_b^{\alpha(\cdot,\cdot)}} x(t)$ 
with ``$\times -$'' style.}
\label{fig:ex2}
\end{figure}
\end{example}

\begin{remark}
From Definition~\ref{def:RL:fi}, it follows that
$$
\LDan x(t)=\frac{d^{n}}{dt^{n}}\, _aI_t^{n-\an} x(t), 
\quad\RDan x(t)=(-1)^{n}\frac{d^{n}}{dt^{n}}\, _tI_b^{n-\an} x(t)
$$
and
$$
\LCan x(t)=\, _aI_t^{n-\an} \frac{d^{n}}{dt^{n}}x(t), 
\quad\RCan x(t)=(-1)^{n}\, _tI_b^{n-\an} \frac{d^{n}}{dt^{n}}x(t).
$$
\end{remark}

Next, we obtain higher-order Caputo fractional derivatives of a power function. 
This allows us to show the effectiveness of our computational approach, that is,
the usefulness of polynomial interpolation in Chebyshev points in fractional calculus
of variable order. In Lemma~\ref{lemma:power}, we assume that the fractional order 
depends only on the first variable: $\alpha_n(t,\t) := \overline{\alpha}_n(t)$, where  
$\overline{\alpha}_n:[a,b]\to (n-1,n)$ is a given function.

\begin{lemma}
\label{lemma:power}
Let $x(t)=(t-a)^\gamma$ with $\gamma>n-1$. Then,
$$ 
{^C_aD_t^{\overline{\alpha}_n(t)}}x(t) 
= \frac{\Gamma(\gamma+1)}{\Gamma(\gamma-\overline{\alpha}_n(t)+1)}
(t-a)^{\gamma-\overline{\alpha}_n(t)}.
$$
\end{lemma}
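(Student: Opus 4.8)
The plan is to compute the derivative straight from Definition~\ref{HOCFD}, exploiting the standing hypothesis that the order depends on the first variable only. First I would observe that, since $\alpha_n(t,\t)=\overline{\alpha}_n(t)$ is independent of the integration variable $\t$, both the factor $1/\Gamma(n-\overline{\alpha}_n(t))$ and the exponent $n-1-\overline{\alpha}_n(t)$ are constant with respect to $\t$. Thus \eqref{eq:left:Cap:der} factors as
$$
{^C_aD_t^{\overline{\alpha}_n(t)}}x(t)
=\frac{1}{\Gamma(n-\overline{\alpha}_n(t))}\int_a^t (t-\t)^{n-1-\overline{\alpha}_n(t)}\,x^{(n)}(\t)\,d\t.
$$
Next I would differentiate the power $x(\t)=(\t-a)^\gamma$ exactly $n$ times; because $\gamma>n-1$, this is legitimate and gives $x^{(n)}(\t)=\frac{\Gamma(\gamma+1)}{\Gamma(\gamma-n+1)}(\t-a)^{\gamma-n}$, whose exponent satisfies $\gamma-n>-1$.

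The second step is to evaluate the resulting integral by the substitution $\t=a+s(t-a)$, which maps $[a,t]$ onto $[0,1]$ and sends $\t-a\mapsto s(t-a)$ and $t-\t\mapsto(1-s)(t-a)$. Collecting the powers of $(t-a)$, whose total exponent is $(n-1-\overline{\alpha}_n(t))+(\gamma-n)+1=\gamma-\overline{\alpha}_n(t)$, reduces the integral to
$$
(t-a)^{\gamma-\overline{\alpha}_n(t)}\int_0^1 s^{\gamma-n}(1-s)^{n-1-\overline{\alpha}_n(t)}\,ds,
$$
and I would recognize the remaining integral as the Beta function $B(\gamma-n+1,\,n-\overline{\alpha}_n(t))$.

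Finally, I would invoke the identity $B(t,u)=\Gamma(t)\Gamma(u)/\Gamma(t+u)$ recalled earlier, write the Beta value as $\Gamma(\gamma-n+1)\Gamma(n-\overline{\alpha}_n(t))/\Gamma(\gamma-\overline{\alpha}_n(t)+1)$, and cancel the common factors $\Gamma(n-\overline{\alpha}_n(t))$ and $\Gamma(\gamma-n+1)$ against those already present, leaving precisely $\frac{\Gamma(\gamma+1)}{\Gamma(\gamma-\overline{\alpha}_n(t)+1)}(t-a)^{\gamma-\overline{\alpha}_n(t)}$. There is no serious obstacle here; the computation is an elementary reduction to a Beta integral. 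The two points that genuinely require care are that the Gamma-prefactor can be pulled out of the integral \emph{only} because the order depends on $t$ alone (otherwise the $\t$-dependence in the exponent would preclude a closed form), and that the convergence of the Beta integral rests exactly on the two hypotheses $\gamma-n+1>0$ (from $\gamma>n-1$) and $n-\overline{\alpha}_n(t)>0$ (from $\overline{\alpha}_n(t)\in(n-1,n)$), both of which I would note explicitly.
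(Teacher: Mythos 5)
Your proposal is correct and follows essentially the same route as the paper's proof: differentiate the power $n$ times, substitute $\t-a=s(t-a)$, recognize the Beta integral, and simplify via $B(t,u)=\Gamma(t)\Gamma(u)/\Gamma(t+u)$. Your explicit remarks on why the Gamma prefactor can be factored out and on the convergence of the Beta integral are welcome additions, but the argument itself is the one in the paper.
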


\begin{proof} 
As $x(t)=(t-a)^\gamma$, if we differentiate it $n$ times, we obtain
$$
x^{(n)}(t)=\frac{\Gamma(\gamma + 1)}{\Gamma(\gamma-n+1)}(t-a)^{\gamma-n}.
$$
Using Definition~\ref{HOCFD} of the left Caputo fractional derivative, we get
$$
\begin{array}{ll}
{^C_aD_t^{\overline{\alpha}_n(t)}}x(t) 
&= \DS \int_a^t \frac{1}{\Gamma(n-\overline{\alpha}_n(t))}
(t-\t)^{n-1-\overline{\alpha}_n(t)}x^{(n)}(\t) d\t\\
&=\DS \int_a^t \frac{\Gamma(\gamma + 1)}{\Gamma(\gamma-n+1)
\Gamma(n-\overline{\alpha}_n(t))}(t-\t)^{n-1-\overline{\alpha}_n(t)}(\t-a)^{\gamma-n} d\t.
\end{array}
$$
Now, we proceed with the change of variables $\t-a=s(t-a)$. 
Using the Beta function $B(\cdot,\cdot)$, we obtain that
$$
\begin{array}{ll}
{^C_aD_t^{\overline{\alpha}_n(t)}}x(t) 
&= \DS \frac{\Gamma(\gamma + 1)}{\Gamma(\gamma-n+1)\Gamma(n-\overline{\alpha}_n(t))} 
\int_0^1 (1-s)^{n-1-\overline{\alpha}_n(t)} s^{\gamma-n} (t-a)^{\gamma-\overline{\alpha}_n(t)} ds\\
&= \DS \frac{\Gamma(\gamma + 1)(t-a)^{\gamma-\overline{\alpha}_n(t)}}{\Gamma(\gamma-n+1)
\Gamma(n-\overline{\alpha}_n(t))} B \left(\gamma-n+1, n-\overline{\alpha}_n(t)\right).\\
&= \DS \frac{\Gamma(\gamma + 1)}{\Gamma(\gamma-\overline{\alpha}_n(t)+1)} 
\quad (t-a)^{\gamma-\overline{\alpha}_n(t)}.
\end{array}
$$
The proof is complete.
\end{proof}

\begin{example}
\label{ex:lemma:power}
Let us revisit Example~\ref{ex:cheb01} by choosing
$\alpha(t,\tau) = \frac{t^2}{2}$ and $x(t) = t^4$ with $t \in [0,1]$.
Table~\ref{table01} shows the approximated values obtained
by our \textsf{Chebfun} function \texttt{leftCaputo(x,alpha,a,n)}
and the exact values computed with the formula given by Lemma~\ref{lemma:power}.
Table~\ref{table01} was obtained using the following \texttt{MATLAB} code:
{\small \begin{verbatim}
  format long
  a = 0; b = 1; n = 1;
  alpha = @(t,tau) t.^2/2;
  x = chebfun(@(t) t.^4, [a b]);
  exact = @(t) (gamma(5)./gamma(5-alpha(t))).*t.^(4-alpha(t)); 
  approximation = leftCaputo(x,alpha,a,n); 
  for i = 1:9
    t = 0.1*i;      
    E = exact(t); 
    A = approximation(t);
    error = E - A;
    [t E A error]
  end
\end{verbatim}}
\begin{table}
\begin{center}
\begin{tabular}{|c|c|c|c|} \hline
$\mathbf{t}$ & \textbf{Exact Value} & \textbf{Approximation} & \textbf{Error} \\ \hline
0.1 & 1.019223177296953e-04 &  1.019223177296974e-04 & -2.046431600566390e-18 \\ \hline
0.2 & 0.001702793965464 &  0.001702793965464 & -2.168404344971009e-18 \\ \hline
0.3 & 0.009148530806348 &  0.009148530806348 &  3.469446951953614e-18 \\ \hline
0.4 & 0.031052290994593 &  0.031052290994592 &  9.089951014118469e-16 \\ \hline
0.5 & 0.082132144921157 &  0.082132144921157 &  6.522560269672795e-16 \\ \hline
0.6 & 0.185651036003120 &  0.185651036003112 &  7.938094626069869e-15 \\ \hline
0.7 & 0.376408251363662 &  0.376408251357416 &  6.246059225389899e-12 \\ \hline
0.8 & 0.704111480975332 &  0.704111480816562 &  1.587694420379648e-10 \\ \hline
0.9 & 1.236753486749357 &  1.236753486514274 &  2.350835082154390e-10 \\ \hline
\end{tabular} 
\end{center}
\caption{Exact values obtained by Lemma~\ref{lemma:power}
for functions of Example~\ref{ex:lemma:power} 
versus computational approximations obtained using  
our \textsf{Chebfun} code of Appendix~\ref{Chebfun:code}.}
\label{table01}
\end{table}
\end{example}

For our next result, we assume that the fractional order depends only on the second variable:  
$\alpha_n(\t,t) := \overline{\alpha}_n(t)$, where  $\overline{\alpha}_n : [a,b] \to (n-1,n)$ is
a given function. The proof is similar to that of 
Lemma~\ref{lemma:power}, and so we omit it here.

\begin{lemma}
\label{lemma:power:2}
Let $x(t)=(b-t)^\gamma$ with $\gamma>n-1$. Then,
$$ 
{^C_tD_b^{\overline{\alpha}_n(t)}}x(t) 
= \frac{\Gamma(\gamma+1)}{\Gamma(\gamma-\overline{\alpha}_n(t)+1)}
(b-t)^{\gamma-\overline{\alpha}_n(t)}.
$$
\end{lemma}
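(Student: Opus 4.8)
The plan is to mirror the computation carried out in the proof of Lemma~\ref{lemma:power}, making the obvious adjustments forced by the fact that we now work with the \emph{right} Caputo derivative and with the function $x(t)=(b-t)^\gamma$. First I would differentiate $x$ exactly $n$ times. Since $\frac{d}{dt}(b-t)^\gamma=-\gamma(b-t)^{\gamma-1}$, each differentiation brings down a factor $-1$, so after $n$ steps one obtains
$$
x^{(n)}(t)=(-1)^n\frac{\Gamma(\gamma+1)}{\Gamma(\gamma-n+1)}(b-t)^{\gamma-n}.
$$
The factor $(-1)^n$ here is precisely what will cancel against the $(-1)^n$ appearing in Definition~\ref{HOCFD} of the right Caputo derivative~\eqref{eq:right:Cap:der}, so the final expression comes out with a positive sign, exactly as in the left-sided case.

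Next I would substitute $x^{(n)}$ into~\eqref{eq:right:Cap:der} with $\alpha_n(\t,t):=\overline{\alpha}_n(t)$, giving
$$
{^C_tD_b^{\overline{\alpha}_n(t)}}x(t)
=\frac{\Gamma(\gamma+1)}{\Gamma(\gamma-n+1)\Gamma(n-\overline{\alpha}_n(t))}
\int_t^b (\t-t)^{n-1-\overline{\alpha}_n(t)}(b-\t)^{\gamma-n}\,d\t,
$$
where the two signs $(-1)^n$ have already been merged. The key step is then the change of variables that reduces the integral to a Beta function. Whereas the left-sided proof used $\t-a=s(t-a)$, here the natural substitution that anchors the integral to the variable endpoints $t$ and $b$ is $\t-t=s(b-t)$, so that $\t=t+s(b-t)$, $b-\t=(1-s)(b-t)$, and $d\t=(b-t)\,ds$, with $s$ ranging over $[0,1]$. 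Under this substitution $(\t-t)^{n-1-\overline{\alpha}_n(t)}=s^{n-1-\overline{\alpha}_n(t)}(b-t)^{n-1-\overline{\alpha}_n(t)}$ and $(b-\t)^{\gamma-n}=(1-s)^{\gamma-n}(b-t)^{\gamma-n}$; collecting the powers of $(b-t)$ yields the overall factor $(b-t)^{\gamma-\overline{\alpha}_n(t)}$, matching the claimed exponent.

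Finally I would recognise the remaining $s$-integral as $\int_0^1 s^{n-1-\overline{\alpha}_n(t)}(1-s)^{\gamma-n}\,ds = B\bigl(n-\overline{\alpha}_n(t),\,\gamma-n+1\bigr)$ and apply the identity $B(u,v)=\Gamma(u)\Gamma(v)/\Gamma(u+v)$ recorded in Section~\ref{sec:FC}. The factors $\Gamma(n-\overline{\alpha}_n(t))$ and $\Gamma(\gamma-n+1)$ in the denominator cancel against the same factors produced by the Beta function, leaving $\Gamma(\gamma+1)/\Gamma(\gamma-\overline{\alpha}_n(t)+1)$, which is the asserted coefficient. I expect no genuine obstacle here: the convergence of the Beta integral is guaranteed by $\gamma>n-1$ (so $\gamma-n>-1$) together with $n-1<\overline{\alpha}_n(t)<n$ (so $n-1-\overline{\alpha}_n(t)>-1$), and the only point demanding a little care is bookkeeping the sign $(-1)^n$ and the ordering of the two Beta-function arguments, which differs from the left-sided case.
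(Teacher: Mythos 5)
Your proposal is correct and is precisely the adaptation of the proof of Lemma~\ref{lemma:power} that the paper itself invokes (the paper omits the details, stating only that the argument is similar). The sign bookkeeping with $(-1)^{2n}=1$, the substitution $\tau-t=s(b-t)$, and the Beta-function identity all check out.
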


Computational experiments similar to those of Example~\ref{ex:lemma:power},
obtained by substituting Lemma~\ref{lemma:power} by Lemma~\ref{lemma:power:2} 
and our \texttt{leftCaputo} routine by the \texttt{rightCaputo} one, 
reinforce the validity of our computational methods.

When dealing with variational problems, one key property is 
integration by parts. In the following theorem, such formulas 
are proved for integrals involving higher-order Caputo 
fractional derivatives of variable order.

\begin{theorem}[Integration by parts]
\label{thm:FIP_HO}
Let $n \in \mathbb{N}$ and $x,y \in C^m\left([a,b], \mathbb{R}\right)$ be two functions. Then,
$$
\int_{a}^{b}y(t) \, \LCan x(t)dt
=\int_a^b x(t) \, {\RDan}y(t)dt+\left[\sum_{k=0}^{n-1} (-1)^{k} x^{(n-1-k)}(t) \, 
\dfrac{d^{k}}{dt^{k}}{_tI_b^{n-\an}}y(t) \right]_{a}^{b}
$$
and
$$
\int_{a}^{b}y(t) \, {\RCan}x(t)dt=\int_a^b x(t) \, {\LDan} y(t)dt
+\left[\sum_{k=0}^{n-1} (-1)^{n+k}x^{(n-1-k)}(t) \, 
\dfrac{d^{k}}{dt^{k}}{_aI_t^{n-\an}}y(t)\right]_{a}^{b}.
$$
\end{theorem}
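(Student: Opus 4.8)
The plan is to prove the first identity by writing out the Caputo derivative via its integral representation and applying the classical (integer-order) integration by parts formula $n$ times, peeling off one derivative of $x$ at each step. First I would unfold the left-hand side using the remark that $\LCan x(t) = {_aI_t^{n-\an}} \frac{d^n}{dt^n} x(t)$, so that
$$
\int_a^b y(t)\, \LCan x(t)\, dt
= \int_a^b y(t) \left[ {_aI_t^{n-\an}} x^{(n)}(t) \right] dt.
$$
I would then exchange the order of integration (Dirichlet--Fubini) to move the kernel onto $y$, which converts the left fractional integral acting on $x^{(n)}$ into a right fractional integral acting on $y$. Concretely, this produces $\int_a^b x^{(n)}(t)\, {_tI_b^{n-\an}} y(t)\, dt$, reducing the problem to an ordinary integration by parts between the classical $n$-th derivative $x^{(n)}$ and the smooth-enough function $z(t) := {_tI_b^{n-\an}} y(t)$.

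Next I would integrate by parts $n$ times in the integral $\int_a^b x^{(n)}(t)\, z(t)\, dt$. Each application transfers one derivative from $x$ onto $z$, contributing a boundary term and a sign $(-1)$; after $k$ steps the accumulated boundary contribution carries a factor $(-1)^k$, the function $x$ appears as $x^{(n-1-k)}$, and $z$ carries $k$ derivatives, i.e. $\frac{d^k}{dt^k}\, {_tI_b^{n-\an}} y(t)$. Summing these boundary terms over $k = 0, \dots, n-1$ yields exactly the bracketed expression
$$
\left[\sum_{k=0}^{n-1} (-1)^k\, x^{(n-1-k)}(t)\, \frac{d^k}{dt^k}\, {_tI_b^{n-\an}} y(t)\right]_a^b,
$$
while the remaining integral becomes $(-1)^n \int_a^b x(t)\, \frac{d^n}{dt^n}\, {_tI_b^{n-\an}} y(t)\, dt$. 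Recognizing, again via the remark, that $\RDan y(t) = (-1)^n \frac{d^n}{dt^n}\, {_tI_b^{n-\an}} y(t)$ identifies this remaining integral as $\int_a^b x(t)\, \RDan y(t)\, dt$, which completes the first formula. The second formula follows by the symmetric argument, starting instead from ${\RCan} x(t) = (-1)^n\, {_tI_b^{n-\an}} x^{(n)}(t)$; the extra factor $(-1)^n$ is precisely what turns $(-1)^k$ into $(-1)^{n+k}$ in the boundary sum and replaces the right Riemann--Liouville derivative by the left one.

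The main obstacle I anticipate is the Fubini step that interchanges the order of integration: the variable-order kernel $(t-\t)^{n-1-\alpha_n(t,\t)}/\Gamma(n-\alpha_n(t,\t))$ depends on both $t$ and $\t$, so swapping the roles of the integration variables does not simply reproduce the right-integral kernel unless one is careful about which argument of $\alpha_n$ is which. One must verify that after the swap the surviving kernel is genuinely $(\t-t)^{n-1-\alpha_n(\t,t)}/\Gamma(n-\alpha_n(\t,t))$, matching Definition~\ref{def:RL:fi}; this is a bookkeeping point about the two-variable order function rather than a deep analytic difficulty, but it is where the argument could silently go wrong. A secondary technical point is justifying that $z(t) = {_tI_b^{n-\an}} y(t)$ is differentiable enough to integrate by parts $n$ times and that all boundary terms are well defined; the hypothesis $x, y \in C^m([a,b])$ (with $m$ large enough relative to $n$) together with the smoothness of the fractional integral of a smooth function should supply this, though the precise relation between $m$ and $n$ is left implicit.
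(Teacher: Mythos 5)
Your proposal matches the paper's proof essentially step for step: unfold the Caputo derivative via its integral representation, swap the order of integration by Dirichlet's formula (with exactly the bookkeeping you flag about which argument of $\alpha_n$ survives the swap, so that the kernel becomes that of ${_tI_b^{n-\an}}$), integrate by parts $n$ times to accumulate the $(-1)^k$ boundary terms, and identify the residual integral through $\RDan y(t)=(-1)^n\frac{d^n}{dt^n}\,{_tI_b^{n-\an}}y(t)$. The only cosmetic difference is that for the second identity the paper invokes Caputo--Torres duality rather than rerunning the mirrored computation; both routes are valid.
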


\begin{proof}
Considering the definition of left Caputo fractional derivative of order $\an$, we obtain
$$
\int_{a}^{b}y(t) \, \LCan x(t)dt
=\int_a^b \int_a^t y(t) \, 
\dfrac{1}{\Gamma(n-\alpha_n(t,\t))} (t-\t)^{n-1-\alpha_n(t,\t)}x^{(n)}(\t)d\t dt.
$$
Using Dirichelet's formula, we rewrite it as
\begin{equation}
\label{eq:star}
\int_a^b \int_t^b y(\t) \, 
\dfrac{(\t-t)^{n-1-\alpha_n(\t,t)}}{\Gamma(n-\alpha_n(\t,t))} x^{(n)}(t)d\t dt
=\int_a^b x^{(n)}(t)  \, {_tI_b^{n-\an}} y(t) dt.
\end{equation}
Using the (usual) integrating by parts formula, we get that \eqref{eq:star} is equal to
$$
-\int_a^b x^{(n-1)}(t) \dfrac{d}{dt}  {_tI_b^{n-\an}} y(t)dt 
+ \left[ x^{(n-1)}(t) {_tI_b^{n-\an}}y(t)\right]_{a}^{b}. 
$$
Integrating by parts again, we obtain
\begin{equation*}
\begin{split}
\int_a^b x^{(n-2)}(t) \, \dfrac{d^2}{dt^2}  {_tI_b^{n-\an}} y(t)dt
+ \left[ x^{(n-1)}(t) \, {_tI_b^{n-\an}}y(t) - x^{(n-2)}(t) 
\dfrac{d}{dt} {_tI_b^{n-\an}}y(t)\right]_{a}^{b}.
\end{split}
\end{equation*}
If we repeat this process $n-2$ times more, we get
\begin{multline*}
\int_a^b x(t) (-1)^n \, \dfrac{d^n}{dt^n}  {_tI_b^{n-\an}} y(t)dt 
+ \left[ \sum_{k=0}^{n-1} (-1)^{k} x^{(n-1-k)}(t) \, 
\dfrac{d^{k}}{dt^{k}} {_tI_b^{n-\an}}y(t)\right]_{a}^{b} \\
=\int_a^b x(t) \RDan y(t) dt + \left[ \sum_{k=0}^{n-1} 
(-1)^{k} x^{(n-1-k)}(t) \, \dfrac{d^{k}}{dt^{k}} {_tI_b^{n-\an}}y(t)\right]_{a}^{b}.
\end{multline*}
The second relation of the theorem for the right Caputo fractional 
derivative of order $\an$ follows directly, from the first, 
by Caputo--Torres duality \cite{MyID:307}.
\end{proof}

\begin{remark} 
If we consider in Theorem~\ref{thm:FIP_HO}
the particular case when $n=1$, then the fractional integration 
by parts formulas take the following well-known forms 
$$
\int_{a}^{b}y(t) \, \LC x(t)dt
=\int_a^b x(t) \, {\RDa}y(t)dt
+\left[x(t) \, {_tI_b^{1-\a}}y(t) \right]_{a}^{b}
$$
and
$$
\int_{a}^{b}y(t) \, {\RCa}x(t)dt
=\int_a^b x(t) \, {\LDa} y(t)dt
-\left[x(t) \, {_aI_t^{1-\a}}y(t)\right]_{a}^{b}
$$
(see, e.g., \cite[Theorem~3.2]{Od_NTFVP2013}).
\end{remark}

\begin{remark}
If $x$ is such that $x^{(i)}(a)=x^{(i)}(b)=0$ for all $i=0, \ldots, n-1$, 
then the higher-order formulas of fractional integration by parts given 
by Theorem~\ref{thm:FIP_HO} can be rewritten as
$$
\int_{a}^{b}y(t) \, \LCan x(t)dt=\int_a^b x(t) \, {\RDan}y(t)dt,
\quad
\int_{a}^{b}y(t) \, {\RCan}x(t)dt=\int_a^b x(t) \, {\LDan} y(t)dt.
$$
\end{remark}

The next step is to consider a linear combination of the previous fractional derivatives.

\begin{definition}[Higher-order combined fractional derivatives]
\label{def1}
Let $\alpha_n,\beta_n: [a,b]^2\rightarrow(n-1,n)$ be the variable fractional order, 
$\gamma^n =\left(\gamma_1^n,\gamma_2^n \right)\in [0,1]^2$ a vector, 
and $x \in C^n\left([a,b]\right)$. The higher-order combined Riemann--Liouville 
fractional derivative of $x$ at $t$ is defined by
$$
D_{\gamma^n}^{\an,\bn}x(t)=\gamma_1^n \, \LDan x(t)+\gamma_2^n \, \RDbn x(t).
$$
Analogously, the higher-order combined Caputo fractional derivative of $x$ at $t$ is defined by
$$
^{C}D_{\gamma^n}^{\an,\bn}x(t)=\gamma_1^n \, \LCan x(t)+\gamma_2^n \, \RCbn x(t).
$$
\end{definition}

See our \textsf{Chebfun} computational code for the higher-order 
combined fractional Caputo derivative in Appendix~\ref{Chebfun:combined}.
Here we illustrate how to use it in \textsf{MATLAB}.

\begin{example}
\label{ex:matlab:comb}
Let $\alpha(t,\tau) = \frac{t^2 + \tau^2}{4}$,
$\beta(t,\tau) = \frac{t + \tau}{3}$ and $x(t) = t$,
$t\in [0,1]$. We have $a = 0$, $b = 1$ and $n = 1$.
For $\gamma = (\gamma_1, \gamma_2) = (0.8, 0.2)$, we have
$^{C}D_{\gamma}^{\alpha(\cdot,\cdot),\beta(\cdot,\cdot)}x(0.4) \approx 0.7144$:
{\small \begin{verbatim}
  a = 0; b = 1; n = 1;
  alpha = @(t,tau) (t.^2 + tau.^2)/.4;
  beta = @(t,tau) (t + tau)/3; 
  x = chebfun(@(t) t, [0 1]);
  gamma1 = 0.8;
  gamma2 = 0.2;
  CC = combinedCaputo(x,alpha,beta,gamma1,gamma2,a,b,n);
  CC(0.4)
  ans = 0.7144
\end{verbatim}}
\end{example}


\section{Variational problems with higher-order derivatives}
\label{sec:highorder}

Given $n\in \mathbb{N}$, let $D$ denote the linear subspace of $C^n([a,b])\times [a,b]$
such that $\DCi x(t)$ exists and is continuous on the interval $[a,b]$ 
for all $i \in \{1, \ldots,n\}$. We endow $D$ with the norm
$$
\|(x,t)\|=\max_{a\leq t \leq  b}|x(t)|+\max_{a\leq t \leq b}
\sum_{i=1}^{n} \left| \DCi  x(t)\right|+|t|.
$$
Consider the following problem of the calculus of variations: 
minimize functional $\mathcal{J}:D\rightarrow \mathbb{R}$, 
\begin{equation}
\label{funct_HO_1}
\mathcal{J}(x,T)
=\int_a^T L\left(t, x(t), \,^CD_{\gamma^1}^{\alpha_1(\cdot,\cdot), 
\beta_1(\cdot,\cdot)}x(t), \ldots, \,^CD_{\gamma^n}^{\alpha_n(\cdot,\cdot), 
\beta_n(\cdot,\cdot)}x(t)  \right) dt + \phi(T,x(T)),
\end{equation}
over all $(x,T)\in D$ subject to boundary conditions
$x(a)=x_a$, $x^{(i)}(a)=x^{i}_a$, $\forall i \in \lbrace1,\ldots,n-1\rbrace$, 
for fixed $x_a, x^{1}_a, \ldots, x^{n-1}_a \in \mathbb{R}$. 
Here the terminal time $T$ and terminal state $x(T)$ are both free. 
For all $i \in \lbrace1, \ldots, n\rbrace$, 
$\alpha_i, \beta_i  \left([a,b]^2\right) \subseteq (i-1,i)$ 
and $\gamma^i=\left(\gamma_1^i, \gamma_2^i\right)$ is a vector. 
The terminal cost function $\phi:[a,b]\times \mathbb{R}\rightarrow \mathbb{R}$ is at least of class $C^1$.
For simplicity of notation, we introduce the operator $[\cdot]_\gamma^{\alpha, \beta}$ defined by
$[x]_\gamma^{\alpha, \beta}(t)
=\left(t, x(t), \,^CD_{\gamma^1}^{\alpha_1(\cdot,\cdot), 
\beta_1(\cdot,\cdot)}x(t), \ldots, 
\,^CD_{\gamma^n}^{\alpha_n(\cdot,\cdot), 
\beta_n(\cdot,\cdot)}x(t)\right)$.
We assume that the Lagrangian $L:[a,b]\times \mathbb{R}^{n+1} \to \mathbb{R}$ 
is a function of class  $C^{1}$. Along the work, we denote by $\partial_i L$, 
$i \in \{1,\ldots, n+2\}$, the partial derivative of the Lagrangian $L$ 
with respect to its $i$th argument.

Now, we can rewrite functional \eqref{funct_HO_1} as
\begin{equation}
\label{funct_HO_1a}
\mathcal{J}(x,T)=\int_a^T L[x]_\gamma^{\alpha, \beta}(t) dt + \phi(T,x(T)).
\end{equation}
In the sequel, we need the auxiliary notation of the dual fractional derivative:
$$
D_{\overline{\gamma^i}}^{\bi,\ai}=\gamma_2^i \, {_aD_t^{\bi}}
+\gamma_1^i \, {_tD_T^{\ai}},\quad \mbox{where} 
\quad \overline{\gamma^i}=(\gamma_2^i,\gamma_1^i).
$$

In \cite{Tavares_1}, we obtained fractional necessary optimality conditions 
that every local minimizer of functional $\mathcal{J}$, with $n=1$, must fulfill. 
Here, we generalize \cite{Tavares_1} to arbitrary values of $n$, $n \in \mathbb{N}$.

\begin{theorem}[Necessary optimality conditions for \eqref{funct_HO_1}]
\label{HO_teo1}
Suppose that $(x,T)$ gives a minimum to functional \eqref{funct_HO_1a} on $D$.
Then, $(x,T)$ satisfies the following fractional Euler--Lagrange equations:
\begin{equation}
\label{ELeqHO_1}
\partial_2 L[x]_\gamma^{\alpha, \beta}(t)
+\sum_{i=1}^{n} {D_{\overline{\gamma^i}}^{\bi,\ai}}
\partial_{i+2} L[x]_\gamma^{\alpha, \beta}(t)=0,
\end{equation}
on the interval $[a,T]$, and
\begin{equation}
\label{ELeqHO_2}
\sum_{i=1}^{n} \gamma_2^i \left({_aD_t^{\bi}}\partial_{i+2} 
L[x]_\gamma^{\alpha, \beta}(t)-{ _TD{_t^{\bi}}
\partial_{i+2} L[x]_\gamma^{\alpha, \beta}(t)}\right)=0,
\end{equation}
on the interval $[T,b]$. Moreover, $(x,T)$ satisfies the following transversality conditions:
\begin{equation}
\label{CTHO_1}
\begin{cases}
 L[x]_\gamma^{\alpha,\DS \beta}(T)+\partial_1\phi(T,x(T))+\partial_2\phi(T,x(T))x'(T)=0,\\
\DS \sum_{i=1}^{n}\left[\gamma_1^i (-1)^{i-1} \, \frac{d^{i-1}}{dt^{i-1}}  
{_tI_T^{i-\ai}} \partial_{i+2} L[x]_\gamma^{\alpha, \beta}(t)\right.\\
\quad \quad \DS - \left.\gamma_2^i \,\frac{d^{i-1}}{dt^{i-1}}  {_TI_t^{i-\bi}} 
\partial_{i+2} L[x]_\gamma^{\alpha, \beta}(t)\right]_{t=T} +\partial_2 \phi(T,x(T))=0,\\
\DS \sum_{i=j+1}^{n} \left[ \gamma_1^i (-1)^{i-1-j} \, 
\frac{d^{i-1-j}}{dt^{i-1-j}}  {_tI_T^{i-\ai}} \partial_{i+2} L[x]_\gamma^{\alpha, \beta}(t)\right.\\
\quad \quad \DS + \left.\gamma_2^i (-1)^{j+1}\,\frac{d^{i-1-j}}{dt^{i-1-j}}  
{_TI_t^{i-\bi}} \partial_{i+2} L[x]_\gamma^{\alpha, \beta}(t)\right]_{t=T} =0, \quad \forall j=1,\ldots,n-1,\\
\DS \sum_{i=j+1}^{n} \left[\gamma_2^i (-1)^{j+1}\, \left[ 
\frac{d^{i-1-j}}{dt^{i-1-j}} {_aI_t^{i-\bi}}\partial_{i+2} L[x]_\gamma^{\alpha, \beta}(t)\right.\right.\\
\quad \quad \DS \left.\left. - \,\frac{d^{i-1-j}}{dt^{i-1-j}}   {_TI_t^{i-\bi}\partial_{i+2}
L[x]_\gamma^{\alpha, \beta}(t)}\right]\right]_{t=b}=0, \quad \forall j=0,\ldots,n-1.
\end{cases}
\end{equation}
\end{theorem}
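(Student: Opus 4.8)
The plan is to derive these conditions from the vanishing of the first variation, converting the fractional derivatives of the variation into fractional derivatives of the partial derivatives of $L$ by means of the integration by parts formula of Theorem~\ref{thm:FIP_HO}. Let $(x,T)$ be a minimizer. I would embed it in a one-parameter family $(x + \epsilon h,\, T + \epsilon \Delta T) \in D$, where $\Delta T \in \mathbb{R}$ and $h \in C^n([a,b])$ is an arbitrary variation compatible with the fixed initial data, i.e.\ $h^{(i)}(a) = 0$ for $i = 0, \ldots, n-1$, while the values $h^{(i)}(T)$ and $h^{(i)}(b)$ stay free. Writing $j(\epsilon) = \mathcal{J}(x+\epsilon h, T+\epsilon \Delta T)$, optimality forces $j'(0) = 0$. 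Differentiating the variable-endpoint integral by the Leibniz rule, differentiating under the integral by the chain rule (using linearity of each $\DCi$ in its argument), and differentiating the terminal cost, I obtain
\begin{multline*}
0 = \int_a^T \Bigl[ \partial_2 L[x]_\gamma^{\alpha, \beta}(t)\, h(t) + \sum_{i=1}^n \partial_{i+2} L[x]_\gamma^{\alpha, \beta}(t)\, \DCi h(t) \Bigr]\,dt + L[x]_\gamma^{\alpha, \beta}(T)\,\Delta T \\ + \partial_1\phi(T,x(T))\,\Delta T + \partial_2\phi(T,x(T))\,\bigl(h(T) + x'(T)\Delta T\bigr),
\end{multline*}
the last term reflecting that the endpoint value $(x+\epsilon h)(T+\epsilon\Delta T)$ varies through both $h$ and $\Delta T$.

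The core step is to integrate by parts each term $\int_a^T \partial_{i+2}L \cdot \DCi h\,dt$, recalling that $\DCi h = \gamma_1^i\, {^C_aD_t^{\ai}} h + \gamma_2^i\, {^C_tD_b^{\bi}} h$ and that, since $\alpha_i,\beta_i$ take values in $(i-1,i)$, Theorem~\ref{thm:FIP_HO} is applied with its order parameter equal to $i$. The left Caputo piece is immediate: the first formula on $[a,T]$ transfers the derivative onto $\partial_{i+2}L$, producing $\gamma_1^i\, {_tD_T^{\ai}}\partial_{i+2}L$ together with boundary terms carrying ${_tI_T^{i-\ai}}$. The delicate piece is the right Caputo term, because the outer integral stops at $T$ while ${^C_tD_b^{\bi}} h(t)$ reaches to $b$ through its kernel integral $\int_t^b$. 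I would split $\int_t^b = \int_t^T + \int_T^b$ and interchange the order of integration on each region. The part over $\{a \le t \le \tau \le T\}$ reproduces, after $i$ integrations by parts, the term $\gamma_2^i\, {_aD_t^{\bi}}\partial_{i+2}L$ on $[a,T]$; the part over $\{a\le t\le T < \tau \le b\}$, again after Fubini and $i$ integrations by parts in $\tau$, contributes an integral over $[T,b]$ and boundary evaluations at $b$. This is exactly the mechanism that assembles the dual operator $D_{\overline{\gamma^i}}^{\bi,\ai}$ on $[a,T]$ and, separately, couples the two subintervals.

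After collecting terms, $j'(0)=0$ reads as an integral over $[a,T]$ against $h$, an integral over $[T,b]$ against $h$, and boundary contributions evaluated at $T$ and at $b$ (those at $a$ vanish because $h^{(i)}(a)=0$). I would then apply the fundamental lemma of the calculus of variations twice: choosing $h$ with support compactly contained in $(a,T)$ and all free endpoint data zero forces the $[a,T]$-integrand to vanish, which is the Euler--Lagrange equation \eqref{ELeqHO_1}; choosing $h$ supported in $(T,b)$ forces the $[T,b]$-integrand to vanish, giving \eqref{ELeqHO_2}. With both equations in hand, only the boundary terms survive, and they must vanish for every admissible $\Delta T$ and every independent value $h^{(j)}(T)$ and $h^{(j)}(b)$, $j=0,\ldots,n-1$. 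Equating the coefficient of $\Delta T$ to zero gives the first condition in \eqref{CTHO_1}; the coefficient of $h(T)$, together with the $\partial_2\phi$ contribution from the endpoint variation, gives the second; the coefficients of $h^{(j)}(T)$ for $j\ge 1$ give the third family; and the coefficients of $h^{(j)}(b)$ give the fourth.

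The main obstacle is purely the bookkeeping in the right Caputo term: after splitting the kernel at $T$, applying Fubini, and iterating integration by parts $i$ times, one must track the signs $(-1)^k$, the descending orders $\frac{d^{i-1-j}}{dt^{i-1-j}}$, and the precise fractional-integral factors ${_tI_T^{i-\ai}}$, ${_TI_t^{i-\bi}}$ and ${_aI_t^{i-\bi}}$, and then separate cleanly the evaluations at $t=T$ from those at $t=b$. Getting these indices and the split-point evaluations to line up with the four families in \eqref{CTHO_1} is where essentially all the effort lies; once the integration by parts is organized this way, the Euler--Lagrange equations \eqref{ELeqHO_1}--\eqref{ELeqHO_2} follow directly from the fundamental lemma.
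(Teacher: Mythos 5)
Your proposal is correct and follows essentially the same route as the paper: compute the first variation $j'(0)=0$ of $j(\e)=\mathcal{J}(x+\e h,T+\e\Delta T)$, integrate each term $\int_a^T \partial_{i+2}L[x]_\gamma^{\alpha,\beta}(t)\,\DCi h(t)\,dt$ by parts via Theorem~\ref{thm:FIP_HO}, and apply the fundamental lemma to the coefficients of $h$ on $(a,T)$ and on $(T,b)$ and then to $\Delta T$, $h^{(j)}(T)$ and $h^{(j)}(b)$ to read off \eqref{ELeqHO_1}, \eqref{ELeqHO_2} and the four families in \eqref{CTHO_1}. The only cosmetic difference is in the right-Caputo bookkeeping: the paper writes $\int_a^T=\int_a^b-\int_T^b$ in the outer variable and invokes Theorem~\ref{thm:FIP_HO} on $[a,b]$ and on $[T,b]$, whereas you split the kernel integral at $T$ and apply Fubini directly; the two computations are equivalent and yield the same boundary terms.
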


\begin{proof}
The proof is an extension of the one found in \cite{Tavares_1}.
Let $h\in C^n([a,b])$ be a perturbing curve and $\triangle T \in\mathbb{R}$ 
an arbitrarily chosen small change in $T$. For a small number $\e \in\mathbb{R} \left(\e\rightarrow 0\right)$, 
if $(x,T)$ is a solution to the problem, we consider an admissible variation of $(x,T)$ of the form $\left(x+\e{h},T+\e\Delta{T}\right)$, and then, by the minimum condition, we have that
$\mathcal{J}(x,T) \leq \mathcal{J}(x+\e{h},T+\e\Delta{T})$.
The constraints $x^{(i)}(a)=x^{(i)}_a$ imply that all admissible variations
must fulfill the conditions $h^{(i)}(a)=0,$ for all $i=0, \ldots, n-1$.
We define function $j(\cdot)$ on a neighborhood of zero by
\begin{equation*}
j(\e) =\mathcal{J}(x+\e h,T+\e \triangle T)
=\int_a^{T+\e \triangle T} L[x+\e h]_\gamma^{\alpha, \beta}(t)\,dt
+\phi\left(T+\e \triangle T,(x+\e h)(T+\e \triangle T)\right).
\end{equation*}
The derivative $j'(\e)$ is
\begin{equation*}
\begin{split}
j'(\e)&=\int_a^{T+\e \triangle T} \left(
\partial_2 L[x+\e h]_\gamma^{\alpha, \beta}(t) h(t)
+\sum_{i=1}^{n}  \partial_{i+2} L[x+\e h]_\gamma^{\alpha, \beta}(t) \DCi h(t) \right)dt\\
&\quad \quad +L[x + {\e h}]_\gamma^{\alpha, \beta}(T + \e \Delta T)\Delta T
+ \partial_1 \phi\left(T+\e \triangle T,
(x+\e h)(T+\e \triangle T)\right) \, \Delta T\\
&\quad\quad +\partial_2\phi\left(T+\e \triangle T,
(x+\e h)(T+\e \triangle T)\right) \, (x+\e h)'(T+\e \triangle T).
\end{split}
\end{equation*}
Hence, a necessary condition for $(x,T)$ 
to be a local minimizer of $j$ is given by $j'(0) =0$, that is,
\begin{multline}
\label{eqHO_derj}
\int_a^T \left( \partial_2 L[x]_\gamma^{\alpha, \beta}(t) h(t)
+\sum_{i=1}^{n} \partial_{i+2} L[x]_\gamma^{\alpha, \beta}(t) \DCi h(t) \right)dt
+L[x]_\gamma^{\alpha, \beta}(T)\Delta T\\
+\partial_1 \phi \left(T, x(T)\right)\Delta T
+ \partial_2\phi(T, x(T))\left[h(t)+x'(T) \triangle T \right]=0.
\end{multline}
Considering the second addend of the integral function \eqref{eqHO_derj}, for $i=1$, we get
\begin{equation*}
\begin{split}
&\int_a^T \partial_3 L[x]_\gamma^{\alpha, \beta}(t) {^CD_{\gamma^1}^{\ao,\bo}} h(t) dt
=\int_a^T \partial_3 L[x]_\gamma^{\alpha, \beta}(t)\left[\gamma_1^1
\, \LCao h(t)+\gamma_2^1 \, \RCbo h(t)\right]dt\\
&=\gamma_1^1 \int_a^T \partial_3 L[x]_\gamma^{\alpha, \beta}(t)\LCao h(t)dt  \\
&\quad + \gamma_2^1 \left[ \int_a^b \partial_3 L[x]_\gamma^{\alpha, \beta}(t) \RCbo h(t)dt
- \int_T^b \partial_3 L[x]_\gamma^{\alpha, \beta}(t) \RCbo h(t)dt \right].
\end{split}
\end{equation*}
Integrating by parts (see Theorem~\ref{thm:FIP_HO}), and since $h(a)=0$, we obtain that
\begin{equation*}
\begin{split}
\gamma_1^1 & \left[ \int_a^T h(t) _tD_T^{\ao} \partial_3
L[x]_\gamma^{\alpha, \beta}(t) dt
+\left[h(t) {_t I_T^{1-\ao}\partial_3 L[x]_\gamma^{\alpha, \beta}(t)}\right]_{t=T} \right]\\
&+  \gamma_2^1 \Biggl[ \int_a^b h(t){_aD_t^{\bo}} \partial_3 L[x]_\gamma^{\alpha, \beta}(t) dt
-\left[h(t){_a I_t^{1-\bo}\partial_3 L[x]_\gamma^{\alpha, \beta}(t)}\right]_{t=b} \\
&\qquad \quad -\Biggl( \int_T^b h(t){_TD_t^{\bo}} \partial_3 L[x]_\gamma^{\alpha, \beta}(t) dt
- \left[h(t){_TI_t^{1-\bo}\partial_3 L[x]_\gamma^{\alpha, \beta}(t)}\right]_{t=b}\\
&\qquad \qquad \quad +\left[h(t){_TI_t^{1-\bo}
\partial_3 L[x]_\gamma^{\alpha, \beta}(t)}\right]_{t=T} \Biggr) \Biggr].
\end{split}
\end{equation*}
Unfolding these integrals, and considering the fractional operator
$D_{\overline{\gamma^1}}^{\beta_1,\alpha_1}$ with
$\overline{\gamma^1}=(\gamma_2^1,\gamma_1^1)$,
then the previous term is equal to
\begin{equation*}
\begin{split}
\int_a^T h(t) & D_{\overline{\gamma^1}}^{\bo,\ao}\partial_3L[x]_\gamma^{\alpha, \beta}(t)dt\\
&+ \int_T^b\gamma_2^1 h(t)\left[_aD_t^{\bo}\partial_3 L[x]_\gamma^{\alpha, \beta}(t)
-{_TD_t^{\bo}\partial_3L[x]_\gamma^{\alpha, \beta}(t)}\right]dt\\
&+h(T) \left[\gamma_1^1 \, {_tI_T^{1-\ao}\partial_3L[x]_\gamma^{\alpha, \beta}(t)}
-{\gamma_2^1 \, {_TI_t^{1-\bo}\partial_3L[x]_\gamma^{\alpha, \beta}(t)}}\right]_{t=T}\\
&- h(b)\gamma_2^1\left[  {_aI_t^{1-\bo}\partial_3 L[x]_\gamma^{\alpha, \beta}(t)}
-{_TI_t^{1-\bo}\partial_3L[x]_\gamma^{\alpha, \beta}(t)}\right]_{t=b}.
\end{split}
\end{equation*}
Now, consider the general case
$\displaystyle \int_a^T \partial_{i+2} 
L[x]_\gamma^{\alpha, \beta}(t) {^CD_{\gamma^i}^{\ai,\bi}} h(t) dt$, $i=3, \ldots, n$. Then, 
\begin{equation*}
\begin{split}
&\gamma_1^i  \left[ \int_a^T h(t)  _tD_T^{\ai} \partial_{i+2}
L[x]_\gamma^{\alpha, \beta}(t) dt +\left. \sum_{k=0}^{i-1} 
(-1)^{k} h^{(i-1-k)}(t) \frac{d^k}{dt^k} {_t I_T^{i-\ai} 
\partial_{i+2} L[x]_\gamma^{\alpha, \beta}(t) t)}\right]_{t=T} \right]\\
&+  \gamma_2^i \Biggl[ \int_a^b h(t){_aD_t^{\bi}} \partial_{i+2} L[x]_\gamma^{\alpha, \beta}(t) dt
+ \left. \left. \sum_{k=0}^{i-1} (-1)^{i+k} h^{(i-1-k)}(t) 
\frac{d^k}{dt^k} {_a I_t^{i-\bi}\partial_{i+2} L[x]_\gamma^{\alpha, \beta}(t)}\right]_{t=b} \right]\\
&-\gamma_2^i \left[ \int_T^b h(t) {_TD_t^{\bi}} \partial_{i+2} L[x]_\gamma^{\alpha, \beta}(t) dt \right.
+ \left. \left. \sum_{k=0}^{i-1} (-1)^{i+k} h^{(i-1-k)}(t) 
\frac{d^k}{dt^k} {_T I_t^{i-\bi}\partial_{i+2} 
L[x]_\gamma^{\alpha, \beta}(t)}\right]^{t=b}_{t=T} \right].
\end{split}
\end{equation*}
Unfolding these integrals, we obtain
\begin{equation*}
\begin{split}
&\int_a^T h(t)  D_{\overline{\gamma^i}}^{\bi,\ai}\partial_{i+2}L[x]_\gamma^{\alpha, \beta}(t)dt\\
&+ \int_T^b\gamma_2^i h(t)\left[_aD_t^{\bi}\partial_{i+2} L[x]_\gamma^{\alpha, \beta}(t)
-{_TD_t^{\bi}\partial_{i+2} L[x]_\gamma^{\alpha, \beta}(t)}\right]dt\\
&+h^{(i-1)}(T)\left[\gamma_1^i \, {_tI_T^{i-\ai}\partial_{i+2}L[x]_\gamma^{\alpha, \beta}(t)}
+{\gamma_2^i (-1)^i \, {_TI_t^{i-\bi}\partial_{i+2}L[x]_\gamma^{\alpha, \beta}(t)}}\right]_{t=T}\\
&+h^{(i-1)}(b) \gamma_2^i (-1)^i \left[ \, {_aI_t^{i-\bi}\partial_{i+2}L[x]_\gamma^{\alpha, \beta}(t)}
- {_TI_t^{i-\bi}\partial_{i+2}L[x]_\gamma^{\alpha, \beta}(t)}\right]_{t=b}\\
&+h^{(i-2)}(T)\left[\gamma_1^i (-1)^1 \frac{d}{dt} \, {_tI_T^{i-\ai}
\partial_{i+2}L[x]_\gamma^{\alpha, \beta}(t)}+{\gamma_2^i (-1)^{i+1} 
\frac{d}{dt} \,{_TI_t^{i-\bi}\partial_{i+2}L[x]_\gamma^{\alpha, \beta}(t)}}\right]_{t=T}\\
&+h^{(i-2)}(b) \gamma_2^i (-1)^{i+1} \left[ \, 
\frac{d}{dt} {_aI_t^{i-\bi}\partial_{i+2}L[x]_\gamma^{\alpha, \beta}(t)}
- \frac{d}{dt} {_TI_t^{i-\bi}\partial_{i+2}L[x]_\gamma^{\alpha, \beta}(t)}\right]_{t=b}\\
&\qquad \vdots\\
&+ h(T)\left[ \gamma_1^i (-1)^{i-1} \frac{d^{i-1}}{dt^{i-1}} {_tI_T^{i-\ai}\partial_{i+2} 
L[x]_\gamma^{\alpha, \beta}(t)}+ \gamma_2^i (-1)^{2i-1}\frac{d^{i-1}}{dt^{i-1}}
{_TI_t^{i-\bi}\partial_{i+2}L[x]_\gamma^{\alpha, \beta}(t)}\right]_{t=T}\\
&+ h(b)\gamma_2^i (-1)^{2i-1}\left[  \frac{d^{i-1}}{dt^{i-1}} {_aI_t^{i-\bi}\partial_{i+2} 
L[x]_\gamma^{\alpha, \beta}(t)}- \frac{d^{i-1}}{dt^{i-1}} {_TI_t^{i-\bi}
\partial_{i+2}L[x]_\gamma^{\alpha, \beta}(t)}\right]_{t=b}.
\end{split}
\end{equation*}
Substituting all the relations into equation \eqref{eqHO_derj}, we obtain that
\begin{equation}
\label{eq_HO_derj2}
\begin{split}
0=&\int_a^T h(t)  \left( \partial_2 L[x]_\gamma^{\alpha, \beta}(t) + \sum_{i=1}^{n}
D_{\overline{\gamma^i}}^{\bi,\ai}\partial_{i+2}L[x]_\gamma^{\alpha, \beta}(t) \right) dt\\
&+ \int_T^b h(t) \sum_{i=1}^{n} \gamma_2^i \left[_aD_t^{\bi}\partial_{i+2} L[x]_\gamma^{\alpha, \beta}(t)
-{_TD_t^{\bi}\partial_{i+2} L[x]_\gamma^{\alpha, \beta}(t)}\right]dt\\
& +\sum_{j=0}^{n-1} h^{(j)}(T) \sum_{i=j+1}^{n}\left[ \gamma_1^i (-1)^{i-1-j} 
\frac{d^{i-1-j}}{dt^{i-1-j}}{_tI_T^{i-\ai}\partial_{i+2} L[x]_\gamma^{\alpha, \beta}(t)} \right.\\
& \quad \quad \quad \quad \quad \quad \quad \quad \left. 
+ \gamma_2^i (-1)^{j+1}\frac{d^{i-1-j}}{dt^{i-1-j}}{_TI_t^{i-\bi}\partial_{i+2}
L[x]_\gamma^{\alpha, \beta}(t)}\right]_{t=T}\\
& +\sum_{j=0}^{n-1} h^{(j)}(b) \sum_{i=j+1}^{n} \gamma_2^i (-1)^{j+1}\left[ 
\frac{d^{i-1-j}}{dt^{i-1-j}} {_aI_t^{i-\bi}\partial_{i+2} 
L[x]_\gamma^{\alpha, \beta}(t)} \right. \\
& \quad \quad \quad \quad \quad \quad \quad \quad \left. 
- \frac{d^{i-1-j}}{dt^{i-1-j}}{_TI_t^{i-\bi}\partial_{i+2}
L[x]_\gamma^{\alpha, \beta}(t)} \right]_{t=b}\\
&+ h(T) \partial_2 \phi \left(T, x(T)\right)
+ \Delta T \left[ L[x]_\gamma^{\alpha, \beta}(T)  +\partial_1 \phi \left(T, x(T)\right)
+ \partial_2\phi(T, x(T))x'(T) \right].
\end{split}
\end{equation}
The fractional Euler--Lagrange equations \eqref{ELeqHO_1}--\eqref{ELeqHO_2} 
and the transversality conditions \eqref{CTHO_1} follow by application of 
the fundamental lemma of the calculus of variations (see, e.g., \cite{Brunt}), 
for appropriate choices of variations.
\end{proof}


\section{Variational problems with time delay}
\label{sec:delay}

In this section, we consider fractional variational problems with time delay. 
As mentioned in \cite{machado_delay}, ``We verify that a fractional derivative 
requires an infinite number of samples capturing, therefore, all the signal history, 
contrary to what happens with integer order derivatives that
are merely local operators. This fact motivates the evaluation of calculation 
strategies based on delayed signal samples''. This subject has already been studied 
for constant fractional order \cite{Almeidadelay,Baleanudelay,Jaraddelay,MR2740195}. 
However, for a variable fractional order, it is, to the authors' best knowledge, an open question. 
We also refer to the works \cite{Daftardar,deng,Laza,wangdelay}, where fractional 
differential equations are considered with a time delay.
For simplicity of presentation, we consider fractional orders 
$\alpha,\beta:[a,b]^2\to(0,1)$. Using similar arguments, 
the problem can be easily generalized for higher-order derivatives. 
Let $\s>0$ and define the vector
$\xs(t)= \left(t, x(t), \DC x(t),x(t-\s)\right)$.
For the domain of the functional, we consider the set
$$
D_\s=\left\{(x,t)\in C^1([a-\s,b])\times [a,b]: \quad \DC x(t) 
\, \mbox{ exists and is continuous on } [a,b]\right\}.
$$
Let $\mathcal{J}:D\to\mathbb R$ be the functional defined by
\begin{equation}
\label{funct:delay}
\mathcal{J}(x,T)=\int_a^T L\xs(t)+\phi(T,x(T))
\end{equation}
subject to the boundary condition $x(t)=\varphi(t)$ 
for all $t\in[a-\s,a]$, where $\varphi$ is a given (fixed) function. 
Again, we assume that the Lagrangian $L$ and the payoff term
$\phi$ are differentiable functions and we denote, for $T\in[a,b]$,
$D_{\overline{\gamma}}^{\b,\a}=\gamma_2$, 
${_aD_t^{\b}}+\gamma_1 \, {_tD_T^{\a}}$,
where $\overline{\gamma}=(\gamma_2,\gamma_1)$.

\begin{theorem}[Necessary optimality conditions for \eqref{funct:delay}]
\label{teo:delay}
Suppose that $(x,T)$ gives a local minimum to functional \eqref{funct:delay} on $D_\s$. 
If $\s\geq T-a$, then $(x,T)$ satisfies
\begin{equation}
\label{ELeq_1delay}
\partial_2 L\xs (t)+D{_{\overline{\gamma}}^{\b,\a}}\partial_3 L\xs (t)=0,
\end{equation}
for $t\in[a,T]$, and
\begin{equation}
\label{ELeq_2delay}
\gamma_2\left({\LDb}\partial_3 L\xs (t)-{ _TD{_t^{\b}}\partial_3 L\xs (t)}\right)=0,
\end{equation}
for $t\in[T,b]$. Moreover, $(x,T)$ satisfies
\begin{equation}
\label{CT1delay}
\begin{cases}
L\xs (T)+\partial_1\phi(T,x(T))+\partial_2\phi(T,x(T))x'(T)=0,\\
\left[\gamma_1 \, {_tI_T^{1-\a}} \partial_3L\xs (t)
-\gamma_2 \, {_TI_t^{1-\b}} \partial_3 L\xs (t)\right]_{t=T}
+\partial_2 \phi(T,x(T))=0,\\
\gamma_2 \left[ {_TI_t^{1-\b}}\partial_3 L\xs (t)
-{_aI_t^{1-\b}\partial_3L\xs (t)}\right]_{t=b}=0.
\end{cases}
\end{equation}
If $\s<T-a$, then Eq. \eqref{ELeq_1delay} is replaced by the two following ones:
\begin{equation}
\label{ELeq_3delay}
\partial_2 L\xs (t)+D{_{\overline{\gamma}}^{\b,\a}}\partial_3 L\xs (t)+\partial_4 L\xs (t+\s)=0,
\end{equation}
for $t\in[a,T-\s]$, and
\begin{equation}
\label{ELeq_4delay}
\partial_2 L\xs (t)+D{_{\overline{\gamma}}^{\b,\a}}\partial_3 L\xs (t)=0,
\end{equation}
for $t\in[T-\s,T]$.
\end{theorem}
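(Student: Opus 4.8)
The plan is to mimic the variational argument used in the proof of Theorem~\ref{HO_teo1}, now in the presence of the delayed argument $x(t-\s)$. First I would introduce an admissible variation $(x+\e h, T+\e\Delta T)$, where $h\in C^1([a-\s,b])$ and $\Delta T\in\mathbb{R}$. Since the history $x(t)=\varphi(t)$ is prescribed on $[a-\s,a]$, every admissible variation must satisfy $h(t)=0$ for $t\in[a-\s,a]$. Setting $j(\e)=\mathcal{J}(x+\e h,T+\e\Delta T)$ and imposing the first-order condition $j'(0)=0$ yields an integral identity containing three contributions from the Lagrangian---$\partial_2 L\xs(t)\,h(t)$, $\partial_3 L\xs(t)\,\DC h(t)$, and the delay term $\partial_4 L\xs(t)\,h(t-\s)$---together with the boundary terms coming from differentiating $\phi(T,x(T))$ and from the variable upper limit $T$.

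Next I would process each piece. The fractional term $\int_a^T \partial_3 L\xs(t)\,\DC h(t)\,dt$ is treated exactly as in Theorem~\ref{HO_teo1}: splitting the combined derivative into its left and right Caputo parts, writing $\int_T^b=\int_a^b-\int_a^T$ to accommodate the right derivative whose kernel reaches up to $b$, and then applying the integration by parts formula of Theorem~\ref{thm:FIP_HO} with $n=1$. This produces the dual operator $D_{\overline{\gamma}}^{\b,\a}$ acting on $\partial_3 L\xs(t)$ on $[a,T]$, a residual term on $[T,b]$, and the fractional boundary terms evaluated at $t=T$ and $t=b$ that will feed the transversality conditions \eqref{CT1delay}. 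The delay term is handled by the change of variables $t\mapsto t+\s$, which converts $\int_a^T \partial_4 L\xs(t)\,h(t-\s)\,dt$ into $\int_{a-\s}^{T-\s}\partial_4 L\xs(t+\s)\,h(t)\,dt$; using $h\equiv 0$ on $[a-\s,a]$, the effective lower limit becomes $a$.

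The main obstacle---and the source of the case distinction---is the location of $T-\s$ relative to $a$. If $\s\ge T-a$, then $T-\s\le a$ and the shifted delay integral collapses, so no $\partial_4 L$ term survives on $[a,T]$; collecting the coefficient of $h(t)$ on $[a,T]$ then gives \eqref{ELeq_1delay}, while the coefficient of $h(t)$ on $[T,b]$ gives \eqref{ELeq_2delay}. If instead $\s<T-a$, then $a<T-\s<T$, and the interval $[a,T]$ splits: on $[a,T-\s]$ the delay contribution is present, producing \eqref{ELeq_3delay}, whereas on $[T-\s,T]$ it is absent, producing \eqref{ELeq_4delay}. In either case I would finish by invoking the fundamental lemma of the calculus of variations for variations $h$ vanishing at the appropriate endpoints, and then, by freeing $h(T)$ and $\Delta T$ successively, read off the transversality conditions \eqref{CT1delay} from the boundary terms, exactly as in the proof of Theorem~\ref{HO_teo1}.
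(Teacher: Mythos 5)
Your proposal is correct and follows essentially the same route as the paper: the same admissible variations with $h\equiv 0$ on $[a-\s,a]$, the first-order condition $j'(0)=0$, the shift $t\mapsto t+\s$ in the delay term, the $n=1$ integration by parts of Theorem~\ref{thm:FIP_HO} applied to the combined derivative, the case split on the sign of $T-\s-a$, and the fundamental lemma to extract \eqref{ELeq_1delay}--\eqref{ELeq_4delay} and \eqref{CT1delay}. The only (harmless) difference is that in the case $\s<T-a$ the paper splits the fractional integral at $T-\s$ and integrates by parts on each subinterval, checking that the boundary terms at $T-\s$ cancel, whereas you integrate by parts over all of $[a,T]$ at once and split only when collecting the coefficient of $h$; both yield the same identity.
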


\begin{proof} 
Consider variations of the solution $\left(x+\e h,T+\e\Delta{T}\right)$, 
where $h\in C^1([a-\s,b])$ is such that $h(t)=0$ for all $t\in[a-\s,a]$, 
and $\e,\triangle T$ are two reals. If we define
$j(\e)=\mathcal J\left(x+\e h,T+\e\Delta{T}\right)$, then $j'(0)=0$, that is,
\begin{multline}
\label{eq_derjdelay}
\int_a^T \left( \partial_2 L\xs (t) h(t)+ \partial_3 L\xs (t) \DC h(t) 
+ \partial_4 L\xs (t) h(t-\s) \right)dt\\
+L\xs (T)\Delta T+\partial_1 \phi \left(T, x(T)\right)\Delta T
+ \partial_2\phi(T, x(T))\left[h(T)+x'(T) \triangle T \right]=0.
\end{multline}
First, suppose that $\s \geq T-a$. In this case, since
$$
\int_a^T  \partial_4 L\xs (t) h(t-\s) \, dt
=\int_{a-\s}^{T-\s}  \partial_4 L\xs (t+\s) h(t) \, dt
$$
and $h \equiv 0$ on $[a-\s,a]$, this term vanishes in \eqref{eq_derjdelay} 
and we obtain Eq. (5) of \cite{Tavares_1}. The rest of the proof is similar 
to the one presented in \cite[Theorem 3.1]{Tavares_1}, and we obtain 
\eqref{ELeq_1delay}--\eqref{CT1delay}. Suppose now that $\s<T-a$. In this case, 
we have that
\begin{equation*}
\int_a^T  \partial_4 L\xs (t) h(t-\s) \, dt
=\int_{a-\s}^{T-\s}  \partial_4 L\xs (t+\s) h(t) \, dt
=\int_{a}^{T-\s}  \partial_4 L\xs (t+\s) h(t) \, dt.
\end{equation*}
Next, we evaluate the integral
\begin{multline*}
\int_a^T \partial_3 L\xs (t) \DC h(t) \,dt\\
=\int_a^{T-\s} \partial_3 L\xs (t) \DC h(t) \,dt+\int_{T-\s}^T \partial_3 L\xs (t) \DC h(t) \,dt.
\end{multline*}
For the first integral, integrating by parts, we have
\begin{align*}
&\int_a^{T-\s} \partial_3 L\xs (t) \DC h(t) \,dt 
=\gamma_1 \int_a^{T-\s} \partial_3 L\xs (t)\LC h(t)dt\\
&\quad+ \gamma_2 \left[\int_a^b \partial_3 L\xs (t) \RCb h(t)dt
- \int_{T-\s}^b \partial_3 L\xs (t) \RCb h(t)dt \right]\\
&=\int_a^{T-\s} h(t)\left[{\gamma_1} {_tD_{T-\s}^{\a}} 
\partial_3L\xs (t)+{\gamma_2} {_aD_t^{\b}} \partial_3L\xs (t)\right] dt\\
&\quad+\int_{T-\s}^b\gamma_2 h(t)\left[{_aD_t^{\b}} \partial_3 L\xs (t)
-{_{T-\s}D_t^{\b}} \partial_3 L\xs (t)\right] dt\\
&\quad+ \left[h(t)\left[{\gamma_1} {_tI_{T-\s}^{1-\a}} \partial_3L\xs (t)
-{\gamma_2} {_{T-\s}I_t^{1-\b}} \partial_3L\xs (t)\right]\right]_{t=T-\s}\\
&\quad+ \left[\gamma_2 h(t)\left[-{_aI_t^{1-\b}} \partial_3L\xs (t)
+ {_{T-\s}I_t^{1-\b}} \partial_3L\xs (t)\right]\right]_{t=b}.
\end{align*}
For the second integral, in a similar way, we deduce that
\begin{align*}
\int_{T-\s}^T &\partial_3 L\xs (t) \DC h(t) \,dt \\
&=\gamma_1\left[\int_a^T \partial_3 L\xs (t)\LC h(t)dt
- \int_a^{T-\s} \partial_3 L\xs (t)\LC h(t)dt\right]\\
&\quad+ \gamma_2 \left[\int_{T-\s}^b \partial_3 L\xs (t) \RCb h(t)dt
- \int_T^b \partial_3 L\xs (t) \RCb h(t)dt \right]\\
&=\int_a^{T-\s} \gamma_1  h(t)\left[{_tD_T^{\a}} \partial_3L\xs (t)
-{_tD_{T-\s}^{\a}} \partial_3L\xs (t)\right] dt\\
&\quad+\int_{T-\s}^T h(t)\left[{\gamma_1}{_tD_T^{\a}} \partial_3L\xs (t)
+{\gamma_2}{_{T-\s}D_t^{\b}} \partial_3L\xs (t)\right] dt\\
&\quad+\int_T^b\gamma_2 h(t)\left[{_{T-\s}D_t^{\b}} \partial_3 L\xs (t)
-{_TD_t^{\b}} \partial_3 L\xs (t)\right] dt\\
&\quad+ \left[h(t)\left[-{\gamma_1} {_tI_{T-\s}^{1-\a}} \partial_3L\xs (t)
+{\gamma_2} {_{T-\s}I_t^{1-\b}} \partial_3L\xs (t)\right]\right]_{t=T-\s}\\
&\quad+ \left[h(t)\left[{\gamma_1} {_tI_T^{1-\a}} \partial_3L\xs (t)
-{\gamma_2} {_TI_t^{1-\b}} \partial_3L\xs (t)\right]\right]_{t=T}\\
&\quad+ \left[\gamma_2 h(t)\left[-{_{T-\s}I_t^{1-\b}} \partial_3L\xs (t)
+ {_TI_t^{1-\b}} \partial_3L\xs (t)\right]\right]_{t=b}.
\end{align*}
Replacing the above equalities into \eqref{eq_derjdelay}, we prove that
\begin{align*}
0= &\int_a^{T-\s} h(t)\left[\partial_2 L\xs (t)
+ D_{\overline{\gamma}}^{\b,\a}\partial_3 L\xs (t)+\partial_4 L\xs (t+\s)\right]dt\\
&+\int_{T-\s}^Th(t)\left[\partial_2 L\xs (t)+ D_{\overline{\gamma}}^{\b,\a}\partial_3 L\xs (t)\right]dt\\
&+ \int_T^b \gamma_2 h(t) \left[_aD_t^{\b}\partial_3 L\xs (t)-{_TD_t^{\b}\partial_3 L\xs (t)}\right]dt\\
&+ h(T)\left[\gamma_1 \, {_tI_T^{1-\a}\partial_3L\xs (t)}-{\gamma_2 \, {_TI_t^{1-\b}\partial_3L\xs (t)}}
+\partial_2 \phi(t,x(t))\right]_{t=T}\\
&+\Delta T \left[ L\xs (t)+\partial_1\phi(t,x(t))+\partial_2\phi(t,x(t))x'(t)  \right]_{t=T}\\
&+ h(b)\left[ \gamma_2  \left( _TI_t^{1-\b}\partial_3 L\xs (t)-{_aI_t^{1-\b}
\partial_3L\xs (t)}\right) \right]_{t=b}.
\end{align*}
By the arbitrariness of $h$ in $[a,b]$ and of $\triangle T$, 
we obtain Eqs. \eqref{ELeq_2delay}--\eqref{ELeq_4delay}.
\end{proof}


\section{Examples}
\label{sec:examples}

We provide two illustrative examples. Example~\ref{ex:5.1}
is covered by Theorem~\ref{HO_teo1} while Example~\ref{ex:5.2}
illustrates Theorem~\ref{teo:delay}.

\begin{example}
\label{ex:5.1}
Let $p_{n-1}(t)$ be a polynomial of degree $n-1$. If $\alpha,\beta:[0,b]^2\to(n-1,n)$ 
are the fractional orders, then $\DC p_{n-1}(t)=0$ since $p_{n-1}^{(n)}(t)=0$ for all $t$. 
Consider
$$
\mathcal{J}(x,T)=\int_0^T\left[ \left( \DC x(t)\right)^2+(x(t)-p_{n-1}(t))^2-t-1\right]\,dt +T^2
$$
subject to the initial constraints
$x(0)=p_{n-1}(0)$ and $x^{(k)}(0)=p_{n-1}^{(k)}(0)$, $k=1,\ldots,n-1$.
Observe that, for all $t\in[0,b]$,
$\partial_2 L [x]_\gamma^{\alpha, \beta}(t)=2(x(t)-p_{n-1}(t))$ 
and 
$\partial_3 L [x]_\gamma^{\alpha, \beta}(t) = 2 \, \DC x(t)$. 
Therefore, $\partial_i L [p_{n-1}]_\gamma^{\alpha, \beta}(t)=0$, $i=2,3$. 
Also, the first transversality condition reads as 
$$
\left(\DC x(T)\right)^2+(x(T)-p_{n-1}(T))^2-T-1+2T=0,
$$
which is verified at $(x,T)=( p_{n-1},1)$.
Thus, function $x\equiv p_{n-1}$ and the final time $T=1$ satisfy 
the necessary optimality conditions of Theorem~\ref{HO_teo1}. We also 
remark that one has
$$
\mathcal{J}(x,T)\geq\int_0^T\left[-t-1\right]\,dt +T^2=\frac{T^2}{2}-T
$$
for any curve $x$, which attains a minimum value $-1/2$ at $T=1$. 
Since $\mathcal{J}(p_{n-1},1)=-1/2$, we conclude that $(p_{n-1},1)$ 
is the (global) minimizer of $\mathcal{J}$.
\end{example}

\begin{example}
\label{ex:5.2}
Let $\alpha,\beta:[0,b]^2\to(0,1)$, $f$ be a function of class $C^1$, 
and $\hat{f}(t)= \DC f(t)$. Define $\mathcal{J}$ as
\begin{equation*}
\mathcal{J}(x,T)
=\int_0^T\Biggl[ \left( \DC x(t)-\hat{f}(t)\right)^2+(x(t)-f(t))^2
+(x(t-1)-f(t-1))^2-t-2\Biggr]\,dt +T^2
\end{equation*}
subject to the condition $x(t)=f(t)$ for all $t\in[-1,0]$.
In this case, we can easily verify that $(x,T)=(f,2)$ satisfies all the conditions
in Theorem~\ref{teo:delay}, because $\partial_i L_\s[p_{n-1}]_\gamma^{\alpha, \beta} (t)=0$, 
$i=2,3,4$, and that it is actually the (global) minimizer of the problem.
\end{example}


\section{Conclusion}
\label{sec:conc}

We studied two different types of fractional variational problems of variable order: 
of higher-order and with a time delay. Necessary optimality conditions of
Euler--Lagrange type for such functionals, containing a combined 
Caputo derivative of variable fractional order, were established.
To solve such fractional differential equations, we presented a numerical 
method to deal with the fractional operators, based on the open source 
software package \textsf{Chebfun}. Examples were discussed in order to 
illustrate the new findings and the computational aspects of the work.


\appendix

\section{\textsf{Chebfun} code}
\label{Chebfun:code}

\textsf{Chebfun} is an open source software package that ``aims
to provide numerical computing with functions'' in \textsf{MATLAB} \cite{Matlab}. 
For the mathematical underpinnings of \textsf{Chebfun}, we refer the reader
to \cite{chebfun:book}. For the algorithmic backstory of \textsf{Chebfun}, 
we refer to \cite{ChebfunGuide}. In this appendix, we provide 
our own \textsf{Chebfun} code for the variable order fractional calculus.


\subsection{Higher-order Caputo fractional derivatives}
\label{Chebfun:Caputo:der}

The following code implements operator \eqref{eq:left:Cap:der}:
{\small
\begin{verbatim}
  function r = leftCaputo(x,alpha,a,n)
    dx = diff(x,n);
    g = @(t,tau) dx(tau)./(gamma(n-alpha(t,tau)).*(t-tau).^(1+alpha(t,tau)-n));
    r = @(t) sum(chebfun(@(tau) g(t,tau),[a t],'splitting','on'),[a t]);
  end
\end{verbatim}}
\noindent Similarly, we have defined \eqref{eq:right:Cap:der} with \textsf{Chebfun}
in \textsf{MATLAB} as follows:
{\small
\begin{verbatim}
  function r = rightCaputo(x,alpha,b,n)
    dx = diff(x,n);
    g = @(t,tau) dx(tau)./(gamma(n-alpha(tau,t)).*(tau-t).^(1+alpha(tau,t)-n));
    r = @(t) (-1).^n .* sum(chebfun(@(tau) g(t,tau),[t b],'splitting','on'),[t b]);
  end
\end{verbatim}}
\noindent For examples on the use of functions \texttt{leftCaputo} 
and \texttt{rightCaputo}, see Examples~\ref{ex:cheb01} and \ref{ex:cheb01b}.


\subsection{Riemann--Liouville fractional integrals}
\label{Chebfun:RL:int}

Follows our \textsf{Chebfun}/\textsf{MATLAB} code 
corresponding to Definition~\ref{def:RL:fi}.
The \texttt{leftFI.m} file is given by
{\small
\begin{verbatim}
  function r = leftFI(x,alpha,a)
    g = @(t,tau) x(tau)./(gamma(alpha(t,tau)).*(t-tau).^(1-alpha(t,tau)));
    r = @(t) sum(chebfun(@(tau) g(t,tau),[a t],'splitting','on'),[a t]);
  end
\end{verbatim}}
\noindent while the \texttt{rightFI.m} file reads
{\small
\begin{verbatim}
  function r = rightFI(x,alpha,b)
    g = @(t,tau) x(tau)./(gamma(alpha(tau,t)).*(tau-t).^(1-alpha(tau,t)));
    r = @(t) sum(chebfun(@(tau) g(t,tau),[t b],'splitting','on'),[t b]);
  end
\end{verbatim}}
\noindent See Example~\ref{ex:cheb02}. 


\subsection{Higher-order combined fractional Caputo derivative}
\label{Chebfun:combined}

The combined Caputo derivative, as the names indicates, 
combines both left and right Caputo derivatives, that is,
we make use of functions provided in Appendix~\ref{Chebfun:Caputo:der}:
{\small
\begin{verbatim}
  function r = combinedCaputo(x,alpha,beta,gamma1,gamma2,a,b,n)
    lc = leftCaputo(x,alpha,a,n);
    rc = rightCaputo(x,beta,b,n);
    r = @(t) gamma1 .* lc(t) + gamma2 .* rc(t);
  end
\end{verbatim}}
\noindent See Example~\ref{ex:matlab:comb} for the use of function \texttt{combinedCaputo}.


\bigskip

\small


\noindent \textbf{Acknowledgments.}
This work is part of first author's Ph.D., which is carried out at the
University of Aveiro under the Doctoral Programme
\emph{Mathematics and Applications} of Universities of Aveiro and Minho.
It was supported by Portuguese funds through the
\emph{Center for Research and Development in Mathematics and Applications} (CIDMA),
and \emph{The Portuguese Foundation for Science and Technology} (FCT),
within project UID/MAT/04106/2013. Tavares was also supported
by FCT through the Ph.D. fellowship SFRH/BD/42557/2007.
The authors are grateful to two anonymous referees 
for helpful comments and suggestions. 




\begin{thebibliography}{99}

\bibitem{Abd:Bal:2017}
Abdeljawad T, Baleanu D.
Integration by parts and its applications of a new nonlocal 
fractional derivative with Mittag-Leffler nonsingular kernel.
J. Nonlinear Sci. Appl. (10) 3 (2017) 1098--1107.
{\tt arXiv:1607.00262}

\bibitem{Abel}
Abel N.
Solution de quelques probl\`emes \`a l'aide d'int\'egrales d\'efinies.
Mag. Naturv. (1) 2 (1823) 1--127.

\bibitem{Almeidadelay}
Almeida R.
Fractional variational problems depending on indefinite integrals and with delay.
Bull. Malays. Math. Sci. Soc. (39) 4 (2016) 1515--1528.
{\tt arXiv:1512.06752}

\bibitem{book:APT}
Almeida R, Pooseh S, Torres DFM. 
Computational methods in the fractional calculus of variations. 
Imp. Coll. Press, London, 2015.

\bibitem{Almeida}
Almeida R, Torres DFM.
Calculus of variations with fractional derivatives and fractional integrals.
Appl. Math. Lett. (22) (2009) 1816--1820.
{\tt arXiv:0907.1024}

\bibitem{Askari}
Askari H, Ansari A.
Fractional calculus of variations with a generalized fractional derivative. 
Fract. Differ. Calc. (6) (2016) 57--72. 

\bibitem{Atana}
Atanackovic TM, Pilipovic S.
Hamilton's principle with variable order fractional derivatives.
Fract. Calc. Appl. Anal. (14) (2011) 94--109.

\bibitem{Baleanudelay}
Baleanu D, Maaraba T, Jarad F.
Fractional variational principles with delay.
J. Phys. A  (41) 31 (2008) 315403, 8~pp.

\bibitem{MyID:307}
Caputo MC, Torres DFM. 
Duality for the left and right fractional derivatives.
Signal Process. 107 (2015) 265--271.
{\tt arXiv:1409.5319}

\bibitem{Daftardar}
Daftardar--Gejji V, Sukale Y, Bhalekar S.
Solving fractional delay differential equations: A new approach.
Fract. Calc. Appl. Anal. (16) (2015) 400--418.

\bibitem{deng}
Deng W, Li C, L{\"u} J.
Stability analysis of linear fractional differential system with multiple time delays.
Nonlinear Dynam. (48) 4 (2007) 409--416.

\bibitem{ChebfunGuide}
Driscoll TA, Hale N, Trefethen LN. 
Chebfun Guide, Pafnuty Publications, Oxford, 2014. 

\bibitem{Jaraddelay}
Jarad F, Abdeljawad T, Baleanu D.
Fractional variational principles with delay within Caputo derivatives.
Rep. Math. Phys. (65) 1 (2010) 17--28.

\bibitem{MR2740195}
Jarad F, Abdeljawad T, Baleanu D.
Fractional variational optimal control problems with delayed arguments.
Nonlinear Dynam. (62) 3 (2010) 609--614. 

\bibitem{Kilbas}
Kilbas AA, Srivastava HM, Trujillo JJ.
Theory and applications of fractional differential equations.
North-Holland Mathematics Studies, 204. Elsevier Science B.V., Amsterdam, 2006.

\bibitem{Laza}
Lazarevi\'{c} MP, Spasi\'{c} AM.
Finite-time stability analysis of fractional order time-delay systems: Gronwall's approach.
Math. Comput. Model. (49) 3--4 (2009) 475--481.

\bibitem{Li_Chen}
Li CP, Chen A, Ye J.
Numerical approaches to fractional calculus and fractional ordinary differential equation.
J. Comput. Phys. (230) 9 (2011) 3352--3368.

\bibitem{Li}
Li G, Liu, H.
Stability analysis and synchronization for a class of fractional-order neural networks.
Entropy (18) 55 (2016), 13~pp.

\bibitem{Matlab}
Linge S, Langtangen HP. 
Programming for computations---MATLAB/Octave. 
A gentle introduction to numerical simulations with MATLAB/Octave, 
Springer, Cham, 2016.

\bibitem{book:MOT}
Malinowska AB, Odzijewicz T, Torres DFM. 
Advanced methods in the fractional calculus of variations. 
Springer Briefs in Applied Sciences and Technology, 
Springer, Cham, 2015.

\bibitem{Malin_Tor}
Malinowska AB, Torres DFM.
Fractional calculus of variations for a combined Caputo derivative.
Fract. Calc. Appl. Anal. 14 (2011) 523--537.
{\tt arXiv:1109.4664}

\bibitem{MR2984893}
Malinowska AB, Torres DFM.
Introduction to the fractional calculus of variations.
Imp. Coll. Press, London, 2012.

\bibitem{Od_GFCV2013}
Odzijewicz T, Malinowska AB, Torres DFM.
A generalized fractional calculus of variations.
Control Cybernet. (42) 2 (2013) 443--458.
{\tt arXiv:1304.5282}

\bibitem{Od_FVC2013}
Odzijewicz T, Malinowska AB, Torres DFM.
Fractional variational calculus of variable order.
In: Advances in harmonic analysis and operator theory, 291--301,
Oper. Theory Adv. Appl., 229, Birkhäuser/Springer Basel AG, Basel, 2013.
{\tt arXiv:1110.4141}

\bibitem{Od_NTFVP2013}
Odzijewicz T, Malinowska AB, Torres DFM.
Noether's theorem for fractional variational problems of variable order.
Cent. Eur. J. Phys. (11) 6 (2013) 691--701.
{\tt arXiv:1303.4075}

\bibitem{Oldham}
Oldham KB, Spanier J.
The fractional calculus.
Academic Press, New York, 1974.

\bibitem{Pinto}
Pinto C, Carvalho ARM.
New findings on the dynamics of HIV and TB coinfection models.
Appl. Math. Comput. (242) (2014) 36--46.

\bibitem{Riewe}
Riewe F.
Nonconservative Lagrangian and Hamiltonian mechanics.
Phys. Rev. E (3) 53 (1996) 1890--1899.

\bibitem{Samko_2}
Samko SG.
Fractional integration and differentiation of variable order.
Anal. Math. vol. (21) (1995) 213--236.

\bibitem{Samko_1}
Samko SG, Kilbas AA, Marichev OI.
Fractional integrals and derivatives.
Translated from the 1987 Russian original, Gordon and Breach, Yverdon, 1993.

\bibitem{SamkoRoss}
Samko SG, Ross B.
Integration and differentiation to a variable fractional order.
Integral Transform. Spec. Funct. (1) 4 (1993) 277--300.

\bibitem{Sie}
Sierociuk D, Skovranek T, Macias M, Podlubny I, Petras I, Dzielinski A, Ziubinski P. 
Diffusion process modeling by using fractional--order models.
Appl. Math. Comput. (257) 15 (2015) 2--11.

\bibitem{Sun}
Sun H, Hu S, Chen Y, Chen W, Yu Z.
A dynamic--order fractional dynamic system.
Chinese Phys. Lett. 30:046601 (2013) 4~pp.

\bibitem{Tavares_1}
Tavares D, Almeida R, Torres DFM.
Optimality conditions for fractional variational problems 
with dependence on a combined Caputo derivative of variable order.
Optim. (64) 6 (2015) 1381--1391.
{\tt arXiv:1501.02082}

\bibitem{Tavares_2}
Tavares D, Almeida R, Torres  DFM.
Caputo derivatives of fractional variable order: Numerical approximations.
Commun. Nonlinear Sci. Numer. Simulat. (35) (2016) 69--87.
{\tt arXiv:1511.02017}

\bibitem{Tavares_3}
Tavares D, Almeida R, Torres  DFM.
Constrained fractional variational problems of variable order.
IEEE/CAA J. Automat. Sinica (4) 1 (2017) 80--88.
{\tt arXiv:1606.07512}

\bibitem{machado_delay}
Tenreiro Machado JA.
Time-delay and fractional derivatives.
Adv. Difference Equ. Vol 2011 (2011) 934094, 12~pp. 

\bibitem{chebfun:book}
Trefethen LN.
Approximation theory and approximation practice, 
Society for Industrial and Applied Mathematics, 2013.

\bibitem{Brunt}
van Brunt, B.
The calculus of variations.
Universitext, Springer, New York, 2004.

\bibitem{wangdelay}
Wang H, Yu Y, Wen G, Zhang S.
Stability analysis of fractionalorder neural networks with time delay.
Neural Process. Lett. (42) 2 (2015) 479--500.

\end{thebibliography}
\end{document}